\newtheorem{thm}{Theorem}[section]
\newtheorem{cor}[thm]{Corollary}
\newtheorem{lemma}[thm]{Lemma}
\theoremstyle{definition}
\newtheorem{defn}[thm]{Definition}
\newtheorem{eg}[thm]{Example}
\newtheorem{notat}[thm]{Notation}
\theoremstyle{remark}
\newtheorem{rem}[thm]{Remark}
\DeclareMathOperator{\id}{id}
\DeclareMathOperator{\tr}{tr}
\DeclareMathOperator{\src}{src}
\DeclareMathOperator{\tar}{tar}
\DeclareMathOperator{\real}{Re}
\DeclareMathOperator{\mob}{M\ddot{o}b}
\newcommand{\N}{\mathbb{N}}
\newcommand{\R}{\mathbb{R}}
\newcommand{\C}{\mathbb{C}}
\newcommand{\D}{\mathbb{D}}
\newcommand{\pr}{\mathbb{P}}
\newcommand{\E}{\mathbb{E}}
\newcommand{\op}[1]{\operatorname{#1}}
\newcommand{\mcal}[1]{\mathcal{#1}}
\newcommand{\mfk}[1]{\mathfrak{#1}}
\newcommand{\mbf}[1]{\mathbf{#1}}
\newcommand{\indc}[1]{\mathds{1}\left\{#1\right\}}
\newcommand{\wtilde}[1]{\widetilde{#1}}
\newcommand{\deq}{\stackrel{d}{=}}
\DeclareRobustCommand{\SkipTocEntry}[5]{} 
\newcommand\thefontsize[1]{{#1 The current font size is: \f@size pt\par}}
\begin{document}
\author{Benson Au}
\address{University of California, San Diego\\
         Department of Mathematics\\
         9500 Gilman Drive \# 0112\\
         La Jolla, CA 92093-0112\\
         USA}
       \email{\href{mailto:bau@ucsd.edu}{bau@ucsd.edu}}
\title[Wigner matrices with permuted entries]{Semicircular families of general covariance from Wigner matrices with permuted entries}
\thanks{}
\date{\today}

\begin{abstract}\label{sec:abstract}
Let $(\sigma_N^{(i)})_{i \in I}$ be a family of symmetric permutations of the entries of a Wigner matrix $\mbf{W}_N$. We characterize the limiting traffic distribution of the corresponding family of dependent Wigner matrices $(\mbf{W}_N^{\sigma_N^{(i)}})_{i \in I}$ in terms of the geometry of the permutations. We also consider the analogous problem for the limiting joint distribution of $(\mbf{W}_N^{\sigma_N^{(i)}})_{i \in I}$. In particular, we obtain a description in terms of semicircular families with general covariance structures.  As a special case, we derive necessary and sufficient conditions for traffic independence as well as sufficient conditions for free independence.
\end{abstract}

\maketitle
%%---Table of Contents---%%
%%\setcounter{tocdepth}{}
\tableofcontents

\section{Introduction}\label{sec:intro}

Free probability is a noncommutative (NC) probability theory built on Voiculescu's free independence. In the landmark work \cite{Voi91}, Voiculescu showed that free independence describes the large dimension limit behavior of independent random matrices in many generic situations. In recent years, the necessity of independence in this paradigm has been challenged. Mingo and Popa showed that freeness from the transpose occurs in unitarily invariant ensembles \cite{MP16}. C\'{e}bron, Dahlqvist, and Male extended these results to a larger class of dependent matrices using the traffic probability framework \cite{CDM16}. We now know freeness to be a generic phenomenon for random matrices of cactus-type \cite{AM20}.

In a different direction, one can simply view the transpose as a particular permutation $\sigma_N$ of the entries of a matrix $\mbf{A}_N$. Naturally, one can then ask when such a permutation produces asymptotic freeness for the pair $(\mbf{A}_N, \mbf{A}_N^{\sigma_N})$. This question was first considered by Popa, who found a sufficient condition for asymptotic freeness in the case of the GUE using the Wick calculus \cite{Pop18}. Similar results exist for other ensembles \cite{HP18,PH19,MP19,MPS20,MP20,MPS21}. In this article, we extend the analysis to general Wigner matrices and general NC covariance structures.

\subsection{Statement of results}\label{sec:results}

First, we specify the matrix model under consideration.

\begin{defn}[Wigner matrix]\label{defn:wigner}
For each $N \in \N$, let $\big(\mbf{X}_N(j, k)\big)_{1 \leq j \leq k \leq N}$ be a family of independent random variables such that:
\begin{enumerate}[label=(\roman*)]
\item the off-diagonal entries $\big(\mbf{X}_N(j, k)\big)_{1 \leq j < k \leq N}$ are complex-valued, centered, and of unit variance and pseudovariance $\E\big[\mbf{X}_N(j, k)^2\big] = \beta \in [-1, 1]$;
\item the diagonal entries $\big(\mbf{X}_N(j, j)\big)_{1 \leq j \leq N}$ are real-valued and of finite variance;
\item we have a strong uniform control on the moments
\begin{equation}\label{eq:moment_bound}
  \sup_{N \in \N} \sup_{1 \leq j \leq k \leq N} \E\big[|\mbf{X}_N(j, k)|^\ell\big] \leq m_\ell < \infty.
\end{equation}
\end{enumerate}
We call the random self-adjoint matrix defined by
\[
  \mbf{W}_N(j, k) = \frac{1}{\sqrt{N}}\mbf{X}_N(j, k)
\]
a \emph{Wigner matrix}.
\end{defn}

For a tuple $(j, k) \in [N]^2$, we define the transpose permutation
\[
  \intercal: [N]^2 \to [N]^2, \qquad (j, k) \mapsto (k, j).
\]
We say that a permutation $\sigma_N: [N]^2 \to [N]^2$ is \emph{symmetric} if it commutes with the transpose $\sigma_N \circ \intercal = \intercal \circ \sigma_N$, in which case the permuted matrix
\[
  \mbf{W}_N^{\sigma_N}(i, j) = \mbf{W}_N(\sigma_N(i, j))
\]
is again a Wigner matrix. Note that a symmetric permutation necessarily maps the diagonal to the diagonal. Moreover, if $\sigma_N, \rho_N \in \mfk{S}([N]^2)$, then
\[
  (\mbf{W}_N^{\sigma_N})^{\rho_N} = \mbf{W}_N^{\sigma_N \circ \rho_N}.
\]
Hereafter, when we refer to a matrix $\mbf{A}_N$ (resp., permutation $\sigma_N$), we implicitly refer to a sequence of matrices $(\mbf{A}_N)_{N \in \N}$ (resp., permutations $(\sigma_N)_{N \in \N}$).

Consider a family of symmetric permutations $(\sigma_N^{(i)})_{i \in I}$ and the corresponding family of (dependent) Wigner matrices $(\mbf{M}_N^{(i)})_{i \in I} = (\mbf{W}_N^{\sigma_N^{(i)}})_{i \in I}$. Popa showed that if $\mbf{W}_N \deq \op{GUE}(N, \frac{1}{N})$, then
\begin{equation}\label{eq:popa_condition}
  \#\big(\{(j, k, l) \in [N]^3 : \sigma_N^{(i)}(j, k) \in \{\sigma_N^{(i')}(j, l), \sigma_N^{(i')}(l, k)\}\}\big) = o_{i, i'}(N^2), \qquad \forall i \neq i' \in I
\end{equation}
is a sufficient condition for the asymptotic freeness of the family $(\mbf{M}_N^{(i)})_{i \in I}$ \cite[Theorem 3.2]{Pop18}. In other words, $(\mbf{M}_N^{(i)})_{i \in I}$ converges to a semicircular system $(s_i)_{i \in I}$. We extend this result in the following ways. First, we consider Wigner matrices in the generality of Definition \ref{defn:wigner}. Notably, condition \eqref{eq:popa_condition} is not sufficient for general $\beta$. Second, we consider general covariance structures for the limiting family $(s_i)_{i \in I}$. Finally, we determine necessary and sufficient conditions for asymptotic traffic independence (resp., sufficient conditions for asymptotic freeness).

To motivate the results, we consider the informative case of a pair of matrices $(\mbf{W}_N, \mbf{W}_N^{\sigma_N})$. Heuristically, the independence of the entries of $\mbf{W}_N$ suggests that if we can typically avoid repeating a permuted entry (i.e., avoid the situation of $\mbf{W}_N^{\sigma_N}(j, k) = \mbf{W}_N(j, k)$) when expanding the trace, then the calculation should follow the usual case of independent Wigner matrices. As a first guess, one might then expect that a vanishing proportion of fixed points 
\[
  \op{FP}(\sigma_N) = \{(j, k) \in [N^2] : \sigma_N(j, k) = (j ,k)\} = o(N^2)
\]
would suffice; however, the location of the permuted entry plays a crucial role. For example, in the case of the transpose $\sigma_N = \intercal$, the pair $(\mbf{W}_N, \mbf{W}_N^{\intercal})$ converges to a semicircular family of covariance $\begin{pmatrix} 1 & \beta \\ \beta & 1 \end{pmatrix}$ \cite[Corollary 3.9]{AM20}. This leads us to consider the set of transposed entries
\[
  \op{TP}(\sigma_N) = \{(j, k) \in [N^2] : \sigma_N(j, k) = (k ,j)\}.
\]
The case of $\beta = 1$ suggests that a nontrivial proportion of fixed points should contribute to the covariance accordingly. As a second guess, one might then expect that the limits
\begin{equation}\label{eq:inhomogeneous}
\begin{aligned}
  \lim_{N \to \infty} \frac{\#\big(\text{FP}(\sigma_N)\big)}{N^2} = a; \\
  \lim_{N \to \infty} \frac{\#\big(\text{TP}(\sigma_N)\big)}{N^2} = b,
\end{aligned}
\end{equation}
would produce a semicircular family of covariance $\begin{pmatrix} 1 & a+b\beta \\ a+b\beta & 1 \end{pmatrix}$; however, the multiplicative structure of a semicircular family requires a certain homogeneity in the proportion of fixed points (resp., transposed points) in any given row that the limits \eqref{eq:inhomogeneous} alone do not impose. For example, consider the permutation
\[
  \rho_N(j, k) = \begin{dcases}
    (j, k) &\text{if } \max(j, k) \in 2\N + 1; \\
    (k, j) &\text{if } \max(j, k) \in 2\N.
    \end{dcases}
\]
See Figure \ref{figure:inhomogeneous} for an illustration. A straightforward calculation shows that
\[
  \lim_{N \to \infty} \frac{\#\big(\text{FP}(\rho_N)\big)}{N^2} = \lim_{N \to \infty} \frac{\#\big(\text{TP}(\rho_N)\big)}{N^2} = \frac{1}{2},
\]
whereas
\[
  \lim_{N \to \infty} \E[\tr(\mbf{W}_N\mbf{W}_N^{\rho_N}\mbf{W}_N\mbf{W}_N^{\rho_N})] = \frac{2(\beta^2 + \beta + 1)}{3} \neq 2\left(\frac{\beta + 1}{2}\right)^2
\]
unless $\beta = 1$. This leads us to consider the refinement
\begin{align*}
  \op{FP}(\sigma_N, j) = \{k \in [N] : (j, k) \in \op{FP}(\sigma_N)\}; \\
  \op{TP}(\sigma_N, j) = \{k \in [N] : (j, k) \in \op{TP}(\sigma_N)\}.
\end{align*}

\begin{figure}
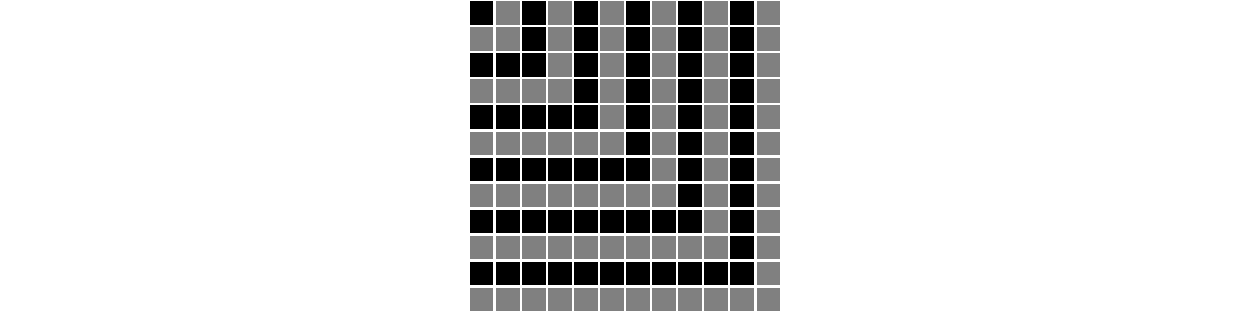
\caption{The action of $\rho_N$: black entries are fixed and gray entries are sent to their transpose.}\label{figure:inhomogeneous}
\end{figure}

But even this is not enough: as we will see, certain locations contribute to the trace expansion at a higher rate, leading to nonstandard joint distributions. For example, consider the operator-valued matrices
\[
  A_1 = \frac{1}{\sqrt{3}}\begin{pmatrix}
    s_1 & c_1 & c_2 \\
    c_1^* & s_2 & c_3 \\
    c_2^* & c_3^* & s_3
  \end{pmatrix}, \quad
  A_2 = \frac{1}{\sqrt{3}}\begin{pmatrix}
    s_3 & c_3 & c_1 \\
    c_3^* & s_1 & c_2 \\
    c_1^* & c_2^* & s_2
  \end{pmatrix},
\]
where $s_1, s_2, s_3, c_1, c_2, c_3 \in (\mcal{A}, \varphi)$ are $*$-free with $s_1, s_2, s_3$ standard semicircular and $c_1, c_2, c_3$ standard circular. Then $A_1, A_2 \in (\op{Mat}_{3 \times 3}(\C) \otimes \mcal{A}, \tr \otimes \varphi)$ are standard semicircular with
\begin{align*}
  (\tr \otimes \varphi)(A_1A_2) &= 0; \\
  (\tr \otimes \varphi)(A_1A_1A_2A_2) &= \frac{29}{27} \neq 1. 
\end{align*}
The additional contributions to the mixed fourth moment come from the terms
\begin{equation}\label{eq:overcount}
  \varphi(c_1c_1^*c_1c_1^*) = \varphi(c_2^*c_2c_2^*c_2) = 2 \neq 1.
\end{equation}
One can easily construct a sequence of permutations $(\sigma_N)_{N \in \N}$ with $\op{FP}(\sigma_N) = \op{TP}(\sigma_N) = \emptyset$ such that the pair $(\mbf{W}_N, \mbf{W}_N^{\sigma_N})$ converges to $(A_1, A_2)$ in distribution.

To ensure that our limit is distributed as a semicircular family in the traffic sense, we must avoid the possibility of the permuted entries meeting up in a way that overcontributes as in \eqref{eq:overcount}. As it turns out, this only happens if the entries are moved within a grid. 

\begin{defn}\label{defn:grid}
For an entry $(j, k) \in [N]^2$, we define the subset
\[
  \op{Grid}(j, k) = \{(l, m) \in [N]^2 : \{l, m\} \cap \{j, k\} \neq \emptyset\}\setminus\{(j, k), (k, j)\}.
\]
For a permutation $\sigma_N \in \mfk{S}([N]^2)$, we define 
\begin{align*}
  \op{Grid}(\sigma_N) = \{(j, k) \in [N]^2 : \sigma_N(j, k) \in \op{Grid}(j, k)\}.
\end{align*}
We say that $\sigma_N$ \emph{stays off the grid} if $\#(\op{Grid}(\sigma_N)) = o(N^2)$. See Figure \ref{fig:grid} for an illustration.
\end{defn}

\begin{figure}
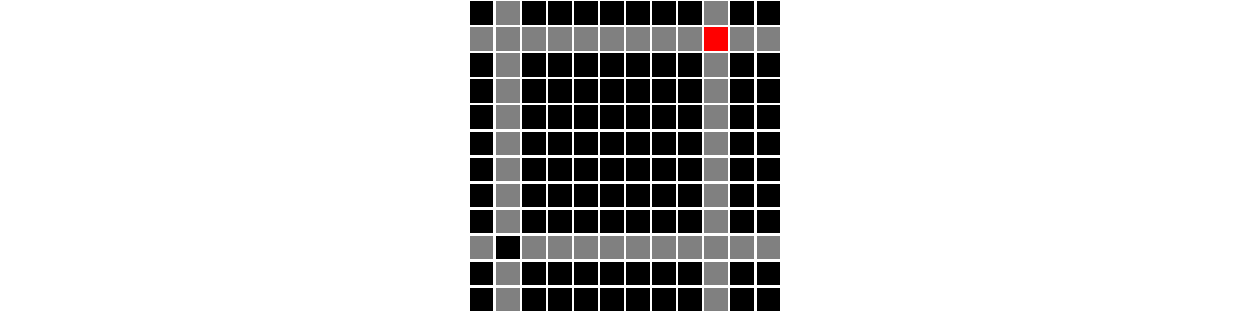
\caption{An example of an entry (in red) and its grid (in gray).}\label{fig:grid}
\end{figure}

Note that the grid condition is stable with respect to inversion: if $\sigma_N$ stays off the grid, then $\sigma_N^{-1}$ stays off the grid. For notational convenience, we write $(\mbf{M}_N^{(i)})_{i \in I} = (\mbf{W}_N^{\sigma_N^{(i)}})_{i \in I}$. We can now state our main results.

\begin{thm}\label{thm:permuted_traffic}
Let $(\sigma_N^{(i)})_{i \in I}$ be a family of symmetric permutations. First, suppose that $\beta \in (-1, 1)$. If
\begin{enumerate}[label=(\roman*)]
\item \label{cond:fp} $\displaystyle \lim_{N \to \infty} \max_{j \in [N]} \frac{\#\Big(\text{\emph{FP}}\big((\sigma_N^{(i')})^{-1} \circ \sigma_N^{(i)}, j\big)\Big)}{N} = \lim_{N \to \infty} \min_{j \in [N]} \frac{\#\Big(\text{\emph{FP}}\big((\sigma_N^{(i')})^{-1} \circ \sigma_N^{(i)}, j\big)\Big)}{N} = a_{i, i'}$;
\item \label{cond:tp} $\displaystyle \lim_{N \to \infty} \max_{j \in [N]} \frac{\#\Big(\text{\emph{TP}}\big((\sigma_N^{(i')})^{-1} \circ \sigma_N^{(i)}, j\big)\Big)}{N} = \lim_{N \to \infty} \min_{j \in [N]} \frac{\#\Big(\text{\emph{TP}}\big((\sigma_N^{(i')})^{-1} \circ \sigma_N^{(i)}, j\big)\Big)}{N} = b_{i, i'}$;
\item \label{cond:grid} $(\sigma_N^{(i')})^{-1} \circ \sigma_N^{(i)}$ stays off the grid for every $i, i' \in I$,
\end{enumerate}
then $(\mbf{M}_N^{(i)})_{i \in I}$ converges in traffic distribution to a semicircular traffic family $(t_i)_{i \in I}$ of covariance
\[
    \begin{tikzpicture}
    %% variance
    \node at (-1.25, 0) {$\mbf{K}(i, i') = \tau\bigg[$};
    \node at (2.5, 0) {$\bigg] = a_{i, i'} + b_{i, i'}\beta$};
    %% vertices
    \draw[fill=black] (0,0) circle (1.75pt);
    \draw[fill=black] (1,0) circle (1.75pt);
    %% edges
    \draw[semithick, ->] (.125, .1) to node[midway, above] {\footnotesize$t_i$\normalsize} (.875, .1);
    \draw[semithick, ->] (.875, -.1) to node[midway, below] {\footnotesize$t_{i'}$\normalsize} (.125, -.1);
  \end{tikzpicture}  
\]
and pseudocovariance
\[
    \begin{tikzpicture}
    %% pseudovariance
    \node at (-1.25, 0) {$\mbf{J}(i, i') = \tau\bigg[$};
    \node at (2.5, 0) {$\bigg] = a_{i, i'}\beta  + b_{i, i'}$};
    %% vertices
    \draw[fill=black] (0,0) circle (1.75pt);
    \draw[fill=black] (1,0) circle (1.75pt);
    %% edges
    \draw[semithick, <-] (.125, .1) to node[midway, above] {\footnotesize$t_i$\normalsize} (.875, .1);
    \draw[semithick, ->] (.875, -.1) to node[midway, below] {\footnotesize$t_{i'}$\normalsize} (.125, -.1);
  \end{tikzpicture}  
\]
Conversely, if $(\mbf{M}_N^{(i)})_{i \in I}$ converges in traffic distribution to $(t_i)_{i \in I}$ as above, then
\begin{enumerate}[label=(\Roman*)]
\item \label{Cond:fp} $\displaystyle \lim_{N \to \infty} \frac{\#\Big(\text{\emph{FP}}\big((\sigma_N^{(i')})^{-1} \circ \sigma_N^{(i)}\big)\Big)}{N^2} = a_{i, i'}$;
\item \label{Cond:tp} $\displaystyle \lim_{N \to \infty} \frac{\#\Big(\text{\emph{TP}}\big((\sigma_N^{(i')})^{-1} \circ \sigma_N^{(i)}\big)\Big)}{N^2} = b_{i, i'}$;
\item \label{Cond:grid} $(\sigma_N^{(i')})^{-1} \circ \sigma_N^{(i)}$ stays off the grid for every $i, i' \in I$.
\end{enumerate}
If $\beta \in \{-1, 1\}$, then the result holds with conditions \ref{cond:fp} and \ref{cond:tp} replaced by the weaker condition
\begin{align*}
  &\lim_{N \to \infty} \max_{j \in [N]} \frac{\#\Big(\text{\emph{FP}}\big((\sigma_N^{(i')})^{-1} \circ \sigma_N^{(i)}, j\big)\Big) + \#\Big(\text{\emph{TP}}\big((\sigma_N^{(i')})^{-1} \circ \sigma_N^{(i)}, j\big)\Big)\beta}{N} \\
  = &\lim_{N \to \infty} \min_{j \in [N]} \frac{\#\Big(\text{\emph{FP}}\big((\sigma_N^{(i')})^{-1} \circ \sigma_N^{(i)}, j\big)\Big) + \#\Big(\text{\emph{TP}}\big((\sigma_N^{(i')})^{-1} \circ \sigma_N^{(i)}, j\big)\Big)\beta}{N} = a_{i, i'} + b_{i, i'}\beta;
\end{align*}
conditions \ref{Cond:fp} and \ref{Cond:tp} replaced by the weaker condition
\begin{enumerate}[label=(\Roman*)]
\setcounter{enumi}{3}
\item \label{Cond:fptp} $\displaystyle \lim_{N \to \infty} \frac{\#\Big(\text{\emph{FP}}\big((\sigma_N^{(i')})^{-1} \circ \sigma_N^{(i)}\big)\Big) + \#\Big(\text{\emph{TP}}\big((\sigma_N^{(i')})^{-1} \circ \sigma_N^{(i)}\big)\Big) \beta}{N^2} = a_{i, i'} + b_{i, i'}\beta;$
\end{enumerate}
and, if $\beta = -1$, \ref{Cond:grid} replaced by the weaker condition\small
\[
  \lim_{N \to \infty} \frac{\#\big(\{(j, k, l) \in [N]^3: ((\sigma_N^{(i')})^{-1} \circ \sigma_N^{(i)})(j, k) = (l, k)\}\big) - \#\big(\{(j, k, l) \in [N]^3: ((\sigma_N^{(i')})^{-1} \circ \sigma_N^{(i)})(j, k) = (k, l)\}\big)}{N^2} = 0.
\]\normalsize

In particular, if $\beta \in (-1, 1)$ (resp., $\beta = 1$), then conditions \ref{Cond:fp} and \ref{Cond:tp} (resp., condition \ref{Cond:fptp}) with $a_{i, i'} = b_{i, i'} = 0$ for $i \neq i'$ and condition \ref{Cond:grid} are necessary and sufficient for the asymptotic traffic independence of the $(\mbf{M}_N^{(i)})_{i \in I}$.
\end{thm}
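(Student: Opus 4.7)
My plan is to work in the traffic probability framework via the moment method. For any edge-colored test graph $T = (V, E)$ with colors in $I$, the traffic moment of $(\mbf{M}_N^{(i)})_{i \in I}$ along $T$ expands, after the Wigner normalization, as a sum $\sum_{\phi: V \to [N]} \prod_{e \in E} \mbf{X}_N(\sigma_N^{(i_e)}(\phi(\src e), \phi(\tar e)))$ suitably scaled by powers of $N$. Grouping terms by the partition $\pi$ of $E$ recording coincidences of permuted entries reduces the problem to an asymptotic analysis of an injective graph sum for each $\pi$, weighted by the joint cumulants of the underlying $\mbf{X}_N$ variables. The uniform moment bound \eqref{eq:moment_bound} together with the Wigner scaling isolates the leading contributions to pair partitions $\pi$ whose quotient $T/\pi$ is a tree, and for each such pair $\{e, e'\}$ with colors $i, i'$ the coincidence condition on $\sigma_N^{(i)}$ and $\sigma_N^{(i')}$ becomes, after applying $(\sigma_N^{(i')})^{-1}$, a fixed-point or transposed-point condition for $(\sigma_N^{(i')})^{-1} \circ \sigma_N^{(i)}$. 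The pseudovariance $\beta$ enters through the orientation pairing within each block.

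For the forward direction with $\beta \in (-1, 1)$, I would show that conditions \ref{cond:fp}--\ref{cond:tp} cause the injective vertex sums to factorize along the tree structure: after separating the column index of each pair block, the row-by-row densities of fixed and transposed points are asymptotically constant, so summing over the column-assigned vertex produces the limit $a_{i, i'} + b_{i, i'}\beta$ for aligned orientation (respectively $a_{i, i'}\beta + b_{i, i'}$ for reversed orientation). This is precisely the injective evaluation of a semicircular traffic family with covariance $\mbf{K}(i, i')$ and pseudocovariance $\mbf{J}(i, i')$ as stated. The grid condition \ref{cond:grid} is what rules out coincidences that are neither fixed nor transposed points: by Definition \ref{defn:grid}, such coincidences are exactly those that can merge edges of $T$ without forcing a collapse of the $\phi$-values, so without \ref{cond:grid} one would accumulate a ``grid excess'' contribution in the spirit of \eqref{eq:overcount}. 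The $\beta \in \{-1, 1\}$ cases merge the fixed-point and transposed-point analyses because $a + b\beta$ and $a\beta + b$ depend only on $a + b\beta$, and the $\beta = -1$ relaxation of \ref{cond:grid} follows by tracking signed cancellations between the two grid orientations appearing in the weakened hypothesis.

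For the converse, I would probe the purported semicircular traffic limit against small distinguishing test graphs. Oriented two-cycles with edges colored $i$ and $i'$, taken in both aligned and reversed orientations, determine $\#(\op{FP}((\sigma_N^{(i')})^{-1} \circ \sigma_N^{(i)}))/N^2$ and $\#(\op{TP}((\sigma_N^{(i')})^{-1} \circ \sigma_N^{(i)}))/N^2$ separately when $\beta \in (-1, 1)$ and jointly when $\beta = \pm 1$, yielding \ref{Cond:fp}, \ref{Cond:tp}, and \ref{Cond:fptp}. Test graphs consisting of a cycle glued to another edge at a common vertex detect grid excesses: if \ref{Cond:grid} fails (respectively, if its signed relaxation fails at $\beta = -1$), the injective traffic moment of such a graph exhibits a term of order one that is incompatible with the semicircular traffic evaluation, forcing \ref{Cond:grid} in the limit. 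Finally, traffic independence within the semicircular class is characterized by the vanishing of the off-diagonal entries of $\mbf{K}$ and $\mbf{J}$, which translates to $a_{i, i'} = b_{i, i'} = 0$ (or $a_{i, i'} + b_{i, i'}\beta = 0$ when $\beta = \pm 1$) for $i \neq i'$, giving the concluding statement.

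The main obstacle is the forward combinatorial analysis, specifically showing that every partition $\pi$ other than the desired tree-valued pair partitions contributes $o(1)$ uniformly in $T$. Controlling the grid is delicate: on-grid coincidences are exactly those that allow a geometric collision of a permuted entry with the grid of a nearby entry, and such collisions can merge edges of $T$ across a block without forcing a vertex collapse in $\phi$, thereby inflating the vertex sum. Closing this estimate requires a careful enumeration of the possible local geometries of a partition block near the grid, combined with simultaneous use of row-uniformity from \ref{cond:fp}--\ref{cond:tp} and the bound $\#(\op{Grid}) = o(N^2)$, to gain the extra factor of $N^{-1}$ per block that brings each nondominant partition into the error term.
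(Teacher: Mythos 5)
Your plan is broadly aligned with the paper's approach: both work in the traffic framework, expand the injective traffic moment $\nu_{\mcal{M}_N}^0[T]$ as a sum over injective vertex labelings, identify double trees as the surviving test graphs, evaluate the limit there via a stripping/conditioning argument, and probe the converse with small test graphs. But there are a few places where your plan either misattributes the role of the hypotheses or underspecifies the central combinatorial step.

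First, the restriction to double trees (the paper's Lemma \ref{lem:double_tree}) uses \emph{only} the grid condition \ref{cond:grid}; the row-uniformity conditions \ref{cond:fp}--\ref{cond:tp} play no role there. You invoke ``simultaneous use of row-uniformity from \ref{cond:fp}--\ref{cond:tp} and the bound $\#(\op{Grid}) = o(N^2)$'' to close the off-grid estimate, but row-uniformity is only needed later, once you have already reduced to double trees, to evaluate the limits there (the paper's Lemma \ref{lem:homogeneity_traffic}). Disentangling these two roles matters: it is precisely what allows the grid condition alone to be an equivalence in the second half of the statement, and what lets the averaged condition \ref{ncond:freeness} substitute for the row-uniform ones in Corollary \ref{cor:permuted_nc}.

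Second, the central combinatorial estimate is more delicate than your summary suggests. The hard case is not simply ``partitions whose quotient is a tree'' versus the rest; it is the configuration where $T$ has singleton edges (edges whose undirected class has multiplicity one) whose $\phi$-coincidence partners are elsewhere in the graph. The paper handles this with an exploration of $\underline{G}=(V,\wtilde{\mcal{N}})$ that gives a runtime bound $\#(A_{N,\phi}) = O_T(N^{1+\#(\wtilde{\mcal{N}}_{\geq 2})+\#(\wtilde{\mcal{N}}_1)/2})$, and then a \emph{bridging} argument: any singleton-edge matching induced by an injective $\phi$ that saturates this bound must match two \emph{adjacent} singleton edges, which can be ``pinched'' (identify the two exterior endpoints) to produce a smaller test graph, and one iterates. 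The grid condition enters exactly at the pinching step: an adjacent singleton matching forces $(\phi(\tar(e)),\phi(\src(e))) \in \op{Grid}((\sigma_N^{(\gamma(e'))})^{-1}\circ\sigma_N^{(\gamma(e))})$, and staying off the grid downgrades the $O(N^2)$ cost of that step to $o(N^2)$. Your phrase ``careful enumeration of the possible local geometries of a partition block near the grid'' gestures at this but does not articulate the reduction-to-adjacency step, and without it the estimate is genuinely incomplete — a non-adjacent singleton matching by itself is not controlled by the grid condition.

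Third, for the converse, the distinguishing test graph detecting grid excess is a \emph{directed path of length two} (three vertices, two single edges colored $i$ and $i'$; see Lemma \ref{lem:double_tree_converse}), not ``a cycle glued to another edge at a common vertex.'' A length-two path is not a double tree, so under the hypothesized limit its injective traffic moment must vanish, and decoding that vanishing gives exactly the grid asymptotics (and, for $\beta = -1$, only the signed relaxation, which is where the exceptional case in the theorem statement comes from). The two-cycle probes you describe for $\mathbf{K}$ and $\mathbf{J}$ match the paper's Lemma \ref{lem:necessary_conditions_traffic}.
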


\begin{rem}\label{rem:exceptional_beta_case_traffic}
The reader may wonder why $\beta = -1$ is an exceptional case. Roughly speaking, the issue arises from precise cancellations between the variance and the pseudovariance. See Remarks \ref{rem:perfect_cancellation_exceptional_beta} and \ref{rem:mixed_double_trees_exceptional_beta} for more details.
\end{rem}

We would like to use the cactus-cumulant correspondence in \cite[\S 3.1]{AM20} to translate Theorem \ref{thm:permuted_traffic} to a more familiar free probabilistic statement; however, not all of the observables in the traffic framework appear in the usual NC framework. Restricting our analysis to this smaller class of observables, we obtain the following corollary.

\begin{cor}\label{cor:permuted_nc}
Let $(\sigma_N^{(i)})_{i \in I}$ be a family of symmetric permutations. Suppose that $\beta \in [-1, 1]$. If
\begin{enumerate}[label=(\alph*)]
\item \label{ncond:fp_tp} 
  \begin{align*}
  &\lim_{N \to \infty} \max_{j \in [N]} \frac{\#\Big(\text{\emph{FP}}\big((\sigma_N^{(i')})^{-1} \circ \sigma_N^{(i)}, j\big)\Big) + \#\Big(\text{\emph{TP}}\big((\sigma_N^{(i')})^{-1} \circ \sigma_N^{(i)}, j\big)\Big)\beta}{N} \\
  = &\lim_{N \to \infty} \min_{j \in [N]} \frac{\#\Big(\text{\emph{FP}}\big((\sigma_N^{(i')})^{-1} \circ \sigma_N^{(i)}, j\big)\Big) + \#\Big(\text{\emph{TP}}\big((\sigma_N^{(i')})^{-1} \circ \sigma_N^{(i)}, j\big)\Big)\beta}{N} = c_{i, i'};
  \end{align*}
\item \label{ncond:grid} $(\sigma_N^{(i')})^{-1} \circ \sigma_N^{(i)}$ stays off the grid for every $i, i' \in I$,
\end{enumerate}
then $(\mbf{M}_N^{(i)})_{i \in I}$ converges in distribution to a semicircular family $(s_i)_{i \in I}$ of covariance $\mbf{K}(i, i') = c_{i, i'}$. If $\beta = 0$, then we can relax the grid condition \ref{ncond:grid} to Popa's condition for the GUE \eqref{eq:popa_condition}. If one is only interested in freeness $\mbf{K}(i, i') = \indc{i = i'}$, then the homogeneity condition \ref{ncond:fp_tp} can be replaced by the averaged quantity
\begin{enumerate}[label=(\Alph*)]
\item \label{ncond:freeness} $\displaystyle \lim_{N \to \infty} \frac{\#\Big(\text{\emph{FP}}\big((\sigma_N^{(i')})^{-1} \circ \sigma_N^{(i)}\big)\Big) + \indc{\beta \neq 0}\#\Big(\text{\emph{TP}}\big((\sigma_N^{(i')})^{-1} \circ \sigma_N^{(i)}\big)\Big)}{N^2} = 0$.
\end{enumerate}
\end{cor}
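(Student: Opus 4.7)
The plan is to deduce the corollary from Theorem \ref{thm:permuted_traffic} by projecting from the traffic framework onto the NC framework. The key reduction is that the NC trace moments of a semicircular traffic family of covariance $\mbf{K}$ and pseudocovariance $\mbf{J}$ coincide with those of a semicircular family of covariance $\mbf{K}$ alone: in the traffic expansion, tracial moments correspond to directed-cycle test graphs, whose edges are paired in a co-oriented fashion and so pick out only $\mbf{K}$. We therefore only need to produce the combined covariance $c_{i, i'} = a_{i, i'} + b_{i, i'}\beta$, not the separate constants $a_{i, i'}$ and $b_{i, i'}$.

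When $\beta \in \{-1, 1\}$, hypotheses \ref{ncond:fp_tp} and \ref{ncond:grid} match (or are stronger than) the weakened hypotheses of Theorem \ref{thm:permuted_traffic}, which directly gives convergence in traffic distribution (a fortiori in NC distribution) to a semicircular family of covariance $c_{i, i'}$. For $\beta \in (-1, 1)$ the theorem's hypotheses \ref{cond:fp} and \ref{cond:tp} require separate per-row control of $\#\op{FP}$ and $\#\op{TP}$, which \ref{ncond:fp_tp} does not provide, so I would not invoke the theorem as a black box. Instead, I would revisit the cycle expansion
\[
  \E\big[\tr\big(\mbf{M}_N^{(i_1)} \cdots \mbf{M}_N^{(i_n)}\big)\big] = \frac{1}{N^{n/2+1}} \sum_{j_1, \ldots, j_n} \E\bigg[\prod_{k = 1}^n \mbf{X}_N\big(\sigma_N^{(i_k)}(j_k, j_{k+1})\big)\bigg]
\]
underlying the proof of Theorem \ref{thm:permuted_traffic} and restrict to cycle combinatorics. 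The leading contributions come from planar pair partitions $\pi \in \op{NC}_2(n)$: for a pair $\{k, l\} \in \pi$ with $\tau_{k, l} = (\sigma_N^{(i_l)})^{-1} \circ \sigma_N^{(i_k)}$, the ``same entry'' pairings (weight $\beta$) correspond on the main term to $(j_k, j_{k+1}) \in \op{TP}(\tau_{k, l})$ and the ``conjugate entry'' pairings (weight $1$) correspond to $(j_k, j_{k+1}) \in \op{FP}(\tau_{k, l})$, while the off-main contributions are exactly grid moves suppressed by \ref{ncond:grid}. Summing each pair's inner index then produces the combined per-row quantity $[\#\op{FP}(\tau_{k, l}, j_k) + \beta \#\op{TP}(\tau_{k, l}, j_k)]/N$, which under \ref{ncond:fp_tp} converges to $c_{i_k, i_l}$ uniformly in $j_k$. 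The tree recursion inherent to NC pair partitions then gives $\sum_{\pi \in \op{NC}_2(n)} \prod_{\{k, l\} \in \pi} c_{i_k, i_l}$, the desired semicircular moment. The $\beta = 0$ relaxation to Popa's condition \eqref{eq:popa_condition} is immediate from this expansion: the ``same entry'' pairings drop out entirely, and the only surviving off-main-term constraints are precisely those counted in \eqref{eq:popa_condition}.

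For the freeness statement, the diagonal $i = i'$ is trivial since $\tau_{i, i} = \id$, giving $c_{i, i} = 1$. For $i \neq i'$, hypothesis \ref{ncond:freeness} supplies only an averaged vanishing, so a product-of-limits argument does not apply directly; however, setting $y_j = [\#\op{FP}(\tau, j) + \beta \#\op{TP}(\tau, j)]/N$, we have $|y_j| \leq 1 + |\beta|$ and
\[
  \frac{1}{N} \sum_{j = 1}^N |y_j| \leq (1 + |\beta|) \cdot \frac{\#\op{FP}(\tau) + \indc{\beta \neq 0}\#\op{TP}(\tau)}{N^2} \to 0
\]
by \ref{ncond:freeness}, so in the cycle expansion the contribution of any planar pair partition containing a cross-pair vanishes by a bounded-times-small estimate, leaving only the diagonal (free) contribution. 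The main obstacle throughout is the bookkeeping in the cycle expansion for $\beta \in (-1, 1)$: one must verify carefully that the pair weights $1$ and $\beta$ enter the leading-order sum only through the combined quantity $\#\op{FP}(\cdot, j) + \beta \#\op{TP}(\cdot, j)$, with every other case absorbed into a grid move. This is implicit in the proof of Theorem \ref{thm:permuted_traffic}, but extracting the cycle-only portion cleanly, so that the weaker hypothesis \ref{ncond:fp_tp} suffices, is the heart of the matter.
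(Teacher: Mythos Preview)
Your approach is correct and essentially the same as the paper's. The paper also reduces to quotients of directed cycles, invokes Lemma~\ref{lem:double_tree} to restrict to double trees, and then observes that such double trees have only opposing twin edges, so that in the stripping calculation of Lemma~\ref{lem:homogeneity_traffic} only the combined quantity $a_{i,i'}+b_{i,i'}\beta$ appears and condition~\ref{ncond:fp_tp} suffices; the $\beta=0$ and freeness refinements are handled exactly as you outline. The only cosmetic difference is that the paper reuses Lemmas~\ref{lem:double_tree} and~\ref{lem:homogeneity_traffic} as already proved (noting that the weaker hypothesis suffices on this restricted class of test graphs), whereas you phrase the $\beta\in(-1,1)$ case as ``revisiting the cycle expansion'' from scratch---but the content is identical.
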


\begin{rem}\label{rem:popa} Popa's sufficient condition for the GUE amounts to an asymmetric version of staying off the grid: in Figure \ref{fig:grid}, one simply needs to avoid the entries in the same row or the same column as the red entry. Indeed, the entries in the remainder of the grid do not factor into the calculations for the GUE since the pseudovariance $\beta = 0$ in this case. For general $\beta$, one needs to rule out the contributions from such arrangements. For example, consider the symmetric permutation defined by
  \[
    \eta_N(j, k) = \begin{dcases}
    (k, j+1) &\text{if } j < k - 1 \equiv 0 \bmod 2; \\
    (k, 1) &\text{if } j = k - 1 \equiv 0 \bmod 2; \\
    (k, j-1) &\text{if } 1 < j < k \equiv 0 \bmod 2; \\
    (k, k-1) &\text{if } 1 = j < k \equiv 0 \bmod 2; \\
    (j+1, k+1) &\text{if } j = k.
    \end{dcases}
  \]
See Figure \ref{fig:popa_failure} for an illustration. Then
  \[
    \op{FP}(\eta_N) = \op{TP}(\eta_N) = \{(j, k, l) \in [N]^3 : \eta_N(j, k) \in \{(j, l), (l, k)\}\} = \emptyset;
  \]
however, $\#(\op{Grid}(\eta_N)) \sim N^2$. A straightforward calculation shows that
  \begin{align*} 
    \lim_{N \to \infty} \E[\tr(\mbf{W}_N\mbf{W}_N^{\eta_N})] &= 0; \\
    \lim_{N \to \infty} \E[\tr(\mbf{W}_N\mbf{W}_N^{\eta_N}\mbf{W}_N\mbf{W}_N^{\eta_N})] &= \frac{\beta^2}{3},
  \end{align*}
which implies that $(\mbf{W}_N, \mbf{W}_N^{\eta_N})$ does not converge to a semicircular family for $\beta \neq 0$ (and so condition \eqref{eq:popa_condition} is no longer sufficient). This discrepancy is brought to the fore in the traffic framework, where the additional linear algebraic structure of the graph operations \emph{necessitates} the grid condition. 
\end{rem}

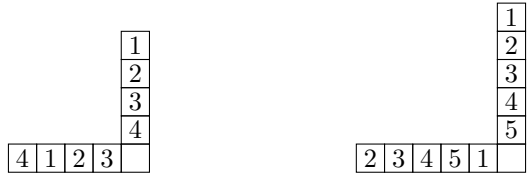
\begin{figure}
  \begin{tikzpicture}[baseline=(current  bounding  box.center)]
    \draw (-5,0) rectangle ++(0.375,0.375) node[pos=.5] {};
    \draw[draw=black] (-5,.375) rectangle ++(0.375,0.375) node[pos=.5] {4};
    \draw[draw=black] (-5,.75) rectangle ++(0.375,0.375) node[pos=.5] {3};
    \draw[draw=black] (-5,1.125) rectangle ++(0.375,0.375) node[pos=.5] {2};
    \draw[draw=black] (-5,1.5) rectangle ++(0.375,0.375) node[pos=.5] {1};

    \draw[draw=black] (-5.375,0) rectangle ++(0.375,0.375) node[pos=.5] {3};
    \draw[draw=black] (-5.75,0) rectangle ++(0.375,0.375) node[pos=.5] {2};
    \draw[draw=black] (-6.125,0) rectangle ++(0.375,0.375) node[pos=.5] {1};
    \draw[draw=black] (-6.5,0) rectangle ++(0.375,0.375) node[pos=.5] {4};

    \draw (0,0) rectangle ++(0.375,0.375) node[pos=.5] {};
    \draw[draw=black] (0,.375) rectangle ++(0.375,0.375) node[pos=.5] {5};
    \draw[draw=black] (0,.75) rectangle ++(0.375,0.375) node[pos=.5] {4};
    \draw[draw=black] (0,1.125) rectangle ++(0.375,0.375) node[pos=.5] {3};
    \draw[draw=black] (0,1.5) rectangle ++(0.375,0.375) node[pos=.5] {2};
    \draw[draw=black] (0,1.875) rectangle ++(0.375,0.375) node[pos=.5] {1};

    \draw[draw=black] (-.375,0) rectangle ++(0.375,0.375) node[pos=.5] {1};
    \draw[draw=black] (-.75,0) rectangle ++(0.375,0.375) node[pos=.5] {5};
    \draw[draw=black] (-1.125,0) rectangle ++(0.375,0.375) node[pos=.5] {4};
    \draw[draw=black] (-1.5,0) rectangle ++(0.375,0.375) node[pos=.5] {3};
    \draw[draw=black] (-1.875,0) rectangle ++(0.375,0.375) node[pos=.5] {2};
\end{tikzpicture}
\caption{The action of $\eta_N$ on off-diagonal entries: on the left, $k \equiv 1 \bmod 2$; on the right, $k \equiv 0 \bmod 2$. In both cases, entries with the same number swap positions.}\label{fig:popa_failure}
\end{figure}

\begin{rem}\label{rem:general_beta}
If $b_{i, i'} \neq 0$, then the restriction to $\beta \in [-1, 1]$ is necessary to ensure convergence to a semicircular family in certain cases. For example, if $\beta \in \D$, then a straightforward calculation shows that
\begin{align*}
  \lim_{N \to \infty} \E[\tr(\mbf{W}_N\mbf{W}_N^\intercal)] &= \real(\beta); \\
  \lim_{N \to \infty} \E[\tr(\mbf{W}_N\mbf{W}_N\mbf{W}_N^\intercal\mbf{W}_N^\intercal)] &= 1 + \frac{2}{3}|\beta|^2 + \frac{1}{6}(\beta^2 + \overline{\beta}^2).
\end{align*}
If $(\mbf{W}_N, \mbf{W}_N^\intercal)$ converges to a semicircular family, then the calculations above would imply that
\[
  1 + \real(\beta)^2 = 1 + \frac{2}{3}|\beta|^2 + \frac{1}{6}(\beta^2 + \overline{\beta}^2),
\]
which reduces to
\[
  (\beta - \overline{\beta})^2 = 0,
\]
and so $\beta \in \D \cap \R = [-1, 1]$.

The obstruction from the transposed entries in the case of strictly complex $\beta$ becomes apparent in the traffic framework. Even in the usual case of independent Wigner matrices, the limiting traffic distribution for such $\beta$ is unwieldy \cite[Equation (3.16)]{Au18a}. One can obtain a similarly unwieldy formula for the limiting traffic distribution of $(\mbf{M}_N^{(i)})_{i \in I}$. On the other hand, the transposed entries are the sole obstruction to convergence to a semicircular family. In particular, Corollary \ref{cor:permuted_nc} still holds for general $\beta \in \D$ with the $\#\Big(\text{TP}\big((\sigma_N^{(i')})^{-1} \circ \sigma_N^{(i)}, j\big)\Big)\beta$ terms removed from \ref{ncond:fp_tp} under the additional assumption that
\begin{enumerate}[label=(\alph*)]
\setcounter{enumi}{2}
\item \label{ncond:general_beta} $\displaystyle \lim_{N \to \infty} \frac{\#\Big(\text{TP}\big((\sigma_N^{(i')})^{-1} \circ \sigma_N^{(i)}\big)\Big)}{N^2} = 0$.
\end{enumerate}
\end{rem}

\subsection{Examples and simulations}\label{sec:examples_and_simulations}

Let $(\mbf{A}_N, \mbf{B}_N)$ be a pair of Wigner matrices. If $\mbf{A}_N$ and $\mbf{B}_N$ were independent, then they would be asymptotically free and the empirical spectral distribution of the anticommutator $[\mbf{A}_N, \mbf{B}_N] = \mbf{A}_N\mbf{B}_N + \mbf{B}_N\mbf{A}_N$ would converge to the symmetric Poisson distribution $\nu_{\mcal{SP}}$ as $N \to \infty$ \cite{NS98}, where
\[
  \nu_{\mcal{SP}}(dx) = \frac{\sqrt{3}\indc{|x| \leq \sqrt{\frac{11 + 5\sqrt{5}}{2}}}}{2\pi|x|}\left(\frac{3x^2 + 1}{9\sqrt[3]{\frac{18x^2+1}{27} - \sqrt{\frac{x^2(1+11x^2-x^4)}{27}}}} - \sqrt[3]{\frac{18x^2+1}{27} - \sqrt{\frac{x^2(1+11x^2-x^4)}{27}}}\right).
\]
For our simulations, we will instead consider dependent Wigner matrices $(\mbf{A}_N, \mbf{B}_N) = (\mbf{W}_N, \mbf{W}_N^{\sigma_N})$. The values of $\beta$ and the permutations $\sigma_N$ will be chosen to satisfy Corollary \ref{cor:permuted_nc} and produce asymptotic freeness. To demonstrate this, we plot the eigenvalues of the anticommutator $[\mbf{A}_N, \mbf{B}_N]$ against the limiting density $\nu_{\mcal{SP}}(dx)$.

\begin{eg}\label{eg:sporadic_transpose}
For $n \in \N$, let
  \[
    \zeta_{N, n}(j, k) = \begin{dcases}
    (j, k) &\text{if } j + k \equiv 0 \bmod n; \\
    (k, j) &\text{if } j + k \not\equiv 0 \bmod n. 
    \end{dcases}
  \]
Of course, if $n = 1$, then $\zeta_{N, n} = \id_N$. In general, $\zeta_{N, n}$ fixes approximately $\frac{1}{n}$ of the entries and sends $\frac{n-1}{n}$ of the entries to their transpose in a way that satisfies the homogeneity condition \ref{ncond:fp_tp}. Note that negative values of $\beta$ allow one to in some sense eliminate dependence: we illustrate this by choosing $n$ and $\beta$ such that $\frac{1}{n} + \frac{n-1}{n}\beta = 0$. See Figure \ref{figure:cancellation}.
\end{eg}

\begin{eg}\label{eg:non_gaussian}
Let $\sigma_N$ be the transpose with respect to the anti-diagonal. In other words, 
  \[
    \sigma_N(j, k) = (N + 1 - k, N + 1 - j).
  \]
  One easily sees that
  \[
    \#\big(\text{FP}(\sigma_N) \cup \text{TP}(\sigma_N)\big) + \#\big(\text{Grid}(\sigma_N)\big) = o(N^2).
  \] 
Corollary \ref{cor:permuted_nc} and Remark \ref{rem:general_beta} then guarantee asymptotic freeness for the pair $(\mbf{W}_N, \mbf{W}_N^{\sigma_N})$ for any $\beta \in \D$. We choose non-Gaussian distributions for the entries of $\mbf{W}_N$ to further emphasize this universality. See Figure \ref{figure:anti_transpose}.
\end{eg}

\begin{figure}
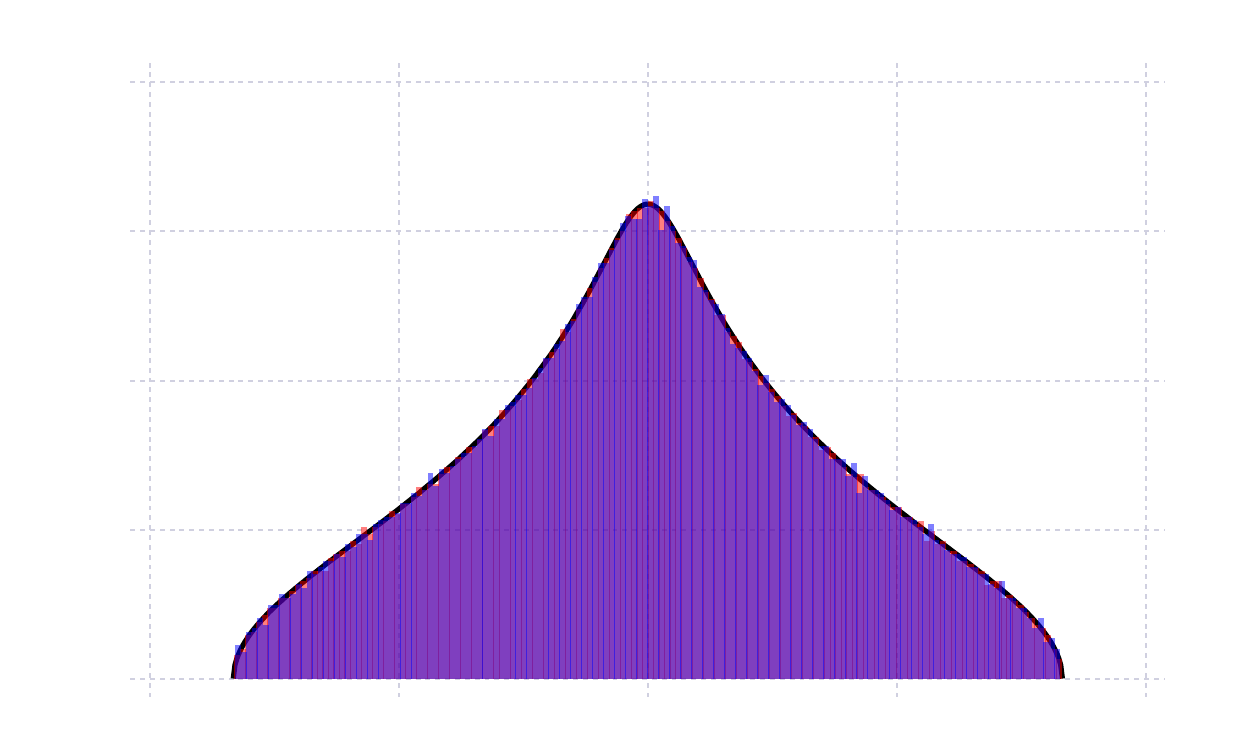
\caption{Histograms of the empirical spectral distribution of the anticommutator $[\mbf{W}_N, \mbf{W}_N^{\zeta_{N, n}}]$ overlaid for two values of $(\beta, n)$ and plotted against the limiting density $\nu_{\mcal{SP}}(dx)$. The overlapping region takes on the color blue + red = purple. The Wigner matrix $\mbf{W}_N$ is chosen to be Gaussian with $N = 10000$.}\label{figure:cancellation}
\end{figure}

\begin{figure}
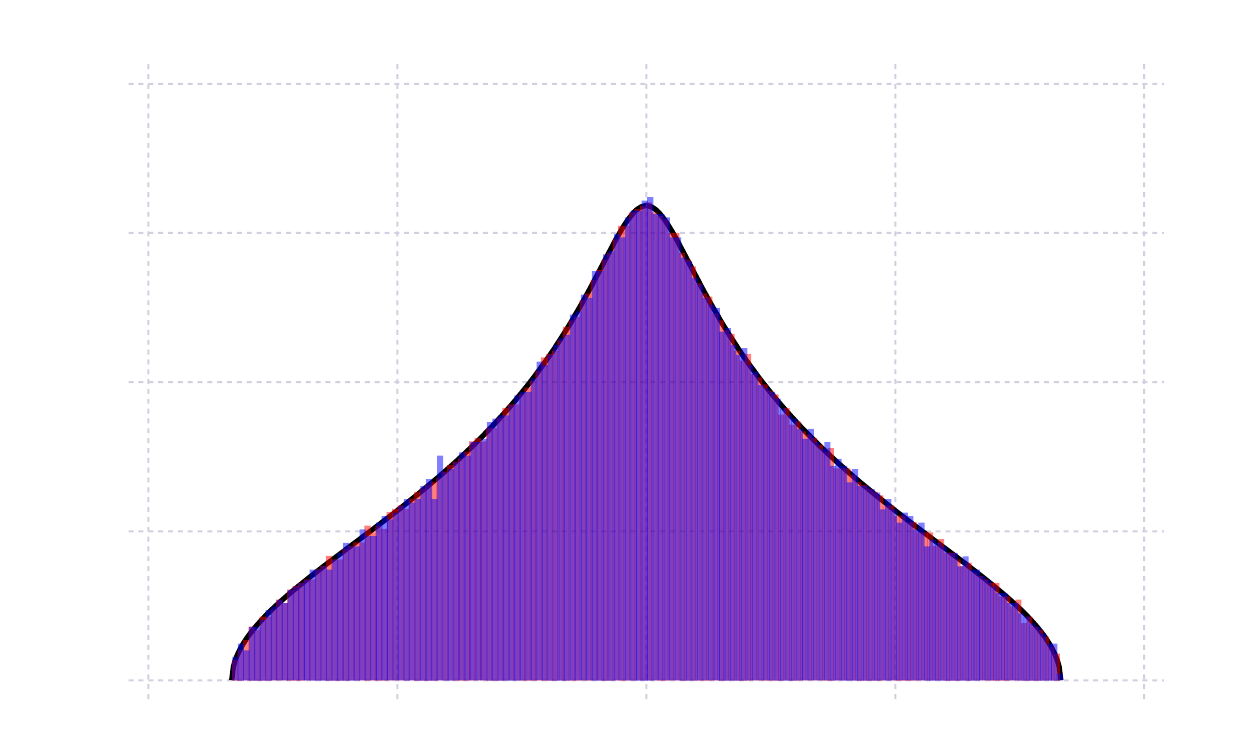
\caption{Histograms of the empirical spectral distribution of the anticommutator $[\mbf{W}_N, \mbf{W}_N^{\sigma_N}]$ overlaid for two values of $\beta$ and plotted against the limiting density $\nu_{\mcal{SP}}(dx)$. The overlapping region takes on the color blue + red = purple. The blue corresponds to a real symmetric Rademacher Wigner matrix ($\beta = 1$); the red corresponds to a complex Hermitian analogue of the Rademacher where the real and imaginary parts of the upper triangular entries are identically equal $X+iX$ ($\beta = \sqrt{-1}$); in both cases, $N = 10000$.}\label{figure:anti_transpose}
\end{figure}

\subsection{Background}\label{sec:background}
We briefly review the relevant aspects of the NC framework. We refer the reader to \cite{VDN92,NS06,MS17} for detailed treatments.

\begin{defn}[Free probability]\label{defn:free_probability}
A \emph{NC probability space (ncps)} is a pair $(\mcal{A}, \varphi)$ with $\mcal{A}$ a unital algebra over $\C$ and $\varphi: \mcal{A} \to \C$ a unital linear functional. The \emph{distribution} of a family of random variables $\mbf{a} = (a_i)_{i \in I} \subset \mcal{A}$ is the linear functional
\[
  \mu_{\mbf{a}}: \C\langle\mbf{x}\rangle \to \C, \qquad P \mapsto \varphi(P(\mbf{a})),
\]
where $\C\langle\mbf{x}\rangle$ is the free algebra over the indeterminates $\mbf{x} = (x_i)_{i \in I}$ and $P(\mbf{a}) \in \mcal{A}$ is the usual evaluation. A sequence of families $\mbf{a}_N = (a_N^{(i)})_{i \in I} \subset (\mcal{A}_N, \varphi_N)$ \emph{converges in distribution} if the corresponding sequence of distributions $(\mu_{\mbf{a}_N})_{N \in \N}$ converges pointwise. Note that the limit defines a new ncps $(\C\langle\mbf{x}\rangle, \lim_{N \to \infty} \mu_{\mbf{a}_N})$.

Let $(\mcal{NC}(n), \leq)$ denote the poset of noncrossing partitions of $[n]$ with the reversed refinement order. We write $0_n$ for the minimal element consisting of singletons; $1_n$ for the maximal element consisting of a single block; and $\mob_{\mcal{NC}}$ for the corresponding M\"{o}bius function. For a noncrossing partition $\pi \in \mcal{NC}(n)$, we define the multilinear functional
\[
  \varphi_\pi : \mcal{A}^n \to \C, \qquad (a_1, \ldots, a_n) \mapsto \prod_{B \in \pi} \varphi(B)[a_1, \ldots, a_n],
\] 
where a block $B = (i_1 < \cdots < i_m) \in \pi$ defines a partial product
\[
\varphi(B)[a_1, \ldots, a_n] = \varphi(a_{i_1} \cdots a_{i_m}).
\]
The \emph{free cumulant} $\kappa_\pi$ is the multilinear functional $\kappa_\pi : \mcal{A}^n \to \C$ obtained from the M\"{o}bius convolution
\begin{equation}\label{eq:free_cumulants_mobius}
  \kappa_\pi[a_1, \ldots, a_n] = \sum_{\substack{\sigma \in \mcal{NC}(n) \\ \text{s.t. } \sigma \leq \pi}} \varphi_\sigma[a_1, \ldots, a_n] \mob_{\mcal{NC}}(\sigma, \pi).
\end{equation}
By the M\"{o}bius inversion formula, this is equivalent to
\begin{equation}\label{eq:free_cumulants_sum}
  \varphi_\pi[a_1, \ldots, a_n] = \sum_{\substack{\sigma \in \mcal{NC}(n) \\ \text{s.t. } \sigma \leq \pi}} \kappa_\sigma[a_1, \ldots, a_n].
\end{equation}
We use the notation $\kappa_n = \kappa_{1_n}$, which allows us to formulate the multiplicative property of the free cumulants:
\[
  \kappa_\pi[a_1, \ldots, a_n] = \prod_{B \in \pi} \kappa(B)[a_1, \ldots, a_n],
\] 
where $B = (i_1 < \cdots < i_m)$ is a block as before and
\[
  \kappa(B)[a_1, \ldots, a_n] = \kappa_m[a_{i_1}, \ldots, a_{i_m}].
\]
Subsets $(\mcal{S}_i)_{i \in I}$ of a ncps $(\mcal{A}, \varphi)$ are \emph{freely independent} if for any $n \geq 2$ and $a_1, \ldots, a_n$ such that $a_j \in \mcal{S}_{i(j)}$,
\[
\exists i(j) \neq i(k) \implies \kappa_n[a_1, \ldots, a_n] = 0.
\]
A sequence of families $(\mbf{a}_N)_{N \in \N}$ is \emph{asymptotically free} if it converges in distribution to a limit $\mbf{x}$ that is free in $(\C\langle\mbf{x}\rangle, \lim_{N \to \infty} \mu_{\mbf{a}_N})$.
\end{defn}

We focus on one specific distribution in the NC framework. In particular, we recall the NC analogue of the multivariate normal distribution.

\begin{eg}[Semicircular family]\label{eg:semicircular_family}
Let $(\mbf{K}(i, i'))_{i, i' \in I}$ be a real positive semidefinite matrix. A family of random variables $\mbf{s} = (s_i)_{i \in I}$ in a ncps $(\mcal{A}, \varphi)$ is a \emph{(centered) semicircular family of covariance $\mbf{K}$} if
\[
  \kappa_2(s_i, s_{i'}) = \mbf{K}(i, i'), \qquad \forall i, i' \in I,
\]
and all other cumulants on $\mbf{s}$ vanish. Equivalently, for any $n \in \N$,
\begin{equation}\label{eq:semicircular_family}
  \varphi(s_{i(1)} \cdots s_{i(n)}) = \sum_{\pi \in \mcal{NC}_2(n)} \prod_{\{j, k\} \in \pi} \mbf{K}(i(j), i(k)),
\end{equation}
where $\mcal{NC}_2(n) \subset \mcal{NC}(n)$ is the subset of noncrossing pair partitions of $[n]$. In particular, the $(s_i)_{i \in I}$ are free iff $(\mbf{K}(i, i'))_{i, i' \in I}$ is diagonal, in which case we call $(s_i)_{i \in I}$ a \emph{semicircular system}.

A well-known result of Dykema proves that independent Wigner matrices $(\mbf{W}_N^{(i)})_{i \in I} \subset (L^{\infty-}(\Omega, \mcal{F}, \pr) \otimes \op{Mat}_N(\C), \E[\tr(\cdot)])$ converge in distribution to a semicircular system \cite[Theorem 2.1]{Dyk93}.
\end{eg}

We will also need some basics from the traffic framework. We refer the reader to \cite{Mal20,CDM16,AM20} for detailed treatments.

\begin{defn}[Traffic probability]\label{defn:traffic_probability}
A \emph{multidigraph} $G = (V, E, \src, \tar)$ consists of a nonempty set of vertices $V$, a set of edges $E$, and a pair of maps $\src, \tar: E \to V$ indicating the source and target of each edge. When the context is clear, we omit the maps $\src, \tar$ from the notation and simply write $G = (V, E)$.

Let $I$ be an index set. A \emph{test graph in $I$} is a pair $T = (G, \gamma)$ consisting of a finite connected multidigraph $G$ with edge labels $\gamma: E \to I$. For a partition $\pi \in \mcal{P}(V)$, we define $T^\pi$ as the quotient test graph obtained from $T$ by identifying vertices in the same block in $\pi$. Formally, $T^\pi = (V^\pi, E^\pi, \src^\pi, \tar^\pi, \gamma^\pi)$, where
\begin{enumerate}[label=(\roman*)]
\item $V^\pi = V/\sim_\pi = \{B : B \in \pi\}$;
\item $E^\pi = E$;
\item $\src^\pi = [\cdot]_\pi \circ \src$ and $\tar^\pi = [\cdot]_\pi \circ \tar$;
\item $\gamma^\pi = \gamma$.
\end{enumerate}
We write $\mcal{T}\langle I \rangle$ for the set of all test graphs in $I$ and $\C\mcal{T}\langle I \rangle$ for the complex vector space spanned by $\mcal{T}\langle I \rangle$.

For the purposes of this article, a \emph{traffic space} is a pair $(\mcal{A}, \tau)$ with $\mcal{A}$ a unital algebra over $\C$ and $\tau: \C\mcal{T}\langle \mcal{A} \rangle \to \C$ a unital linear functional called the \emph{traffic state}. Unital in this context simply means that $\tau[T_0] = 1$ for the trivial test graph $T_0$ consisting of a single isolated vertex. We also work with a transform of the traffic state called the \emph{injective traffic state}:
\[
  \tau^0: \C\mcal{T}\langle \mcal{A} \rangle \to \C, \qquad \tau^0[T] = \sum_{\pi \in \mcal{P}(V)} \tau[T^\pi]\mob_{\mcal{P}}(0_V, \pi),
\]
where $\mob_{\mcal{P}}$ is the M\"{o}bius function on the the poset of partitions $(\mcal{P}(V), \leq)$ with the reversed refinement order and $0_V$ is the singleton partition. By the M\"{o}bius inversion formula, this is equivalent to
\[
  \tau[T] = \sum_{\pi \in \mcal{P}(V)} \tau^0[T^\pi].
\]
Every traffic space $(\mcal{A}, \tau)$ defines a ncps $(\mcal{A}, \varphi_\tau)$, where
\begin{equation}\label{eq:traffic_to_ncps}
  \begin{tikzpicture}[baseline=(current  bounding  box.center), shorten > = 1.5pt]
    %% test graph
    \node at (-2.5, 0) {$\varphi_\tau(a_1 \cdots a_n) = \tau\Bigg[$};
    \node at (1.5, 0) {$\Bigg]$.};
    %% vertices
    \draw[fill=black] (.6, 0) circle (1pt);
    \draw[fill=black] (-.6, 0) circle (1pt);
    \draw[fill=black] (.3, .5196) circle (1pt);
    \draw[fill=black] (.3, -.5196) circle (1pt);
    \draw[fill=black] (-.3, .5196) circle (1pt);
    \draw[fill=black] (-.3, -.5196) circle (1pt);
    %% edges
    \draw[semithick, ->] (.6,0) to node[pos=.625, right] {${\scriptstyle a_{n-1}}$} (.3,-.5196);
    \draw[semithick, ->] (.3,-.5196) to node[midway, below] {${\scriptstyle \cdots}$} (-.3,-.5196);
    \draw[semithick, ->] (-.3,-.5196) to node[pos=.375, left] {${\scriptstyle a_3}$} (-.6, 0);
    \draw[semithick, ->] (-.6, 0) to node[pos=.625, left] {${\scriptstyle a_2}$} (-.3, .5196);
    \draw[semithick, ->] (-.3, .5196) to node[midway, above] {${\scriptstyle a_1}$} (.3, .5196);
    \draw[semithick, ->] (.3, .5196) to node[pos=.375, right] {${\scriptstyle a_n}$} (.6, 0);
  \end{tikzpicture}
\end{equation}

The \emph{traffic distribution} of a family of random variables $\mbf{a} = (a_i)_{i \in I} \subset \mcal{A}$ is the linear functional
\[
  \nu_{\mbf{a}}: \C\mcal{T}\langle I \rangle \to \C, \qquad T \mapsto \tau[T(\mbf{a})],
\]
where $T(\mbf{a}) \in \mcal{T}\langle\mcal{A}\rangle$ is the test graph obtained by replacing the edge labels $\gamma: E \to I$ with edge labels
\[
  \gamma_{\mbf{a}}: E \to \mcal{A}, \qquad e \mapsto a_{\gamma(e)}.
\]
In particular, the traffic distribution $\nu_{\mbf{a}}$ contains the information of the distribution $\mu_{\mbf{a}}$ via \eqref{eq:traffic_to_ncps}. A sequence of families $\mbf{a}_N = (a_N^{(i)})_{i \in I}$ \emph{converges in traffic distribution} if the corresponding sequence of traffic distributions $(\nu_{\mbf{a}_N})_{N \in \N}$ converges pointwise. Note that the limit defines a new traffic space $(\C\langle\mbf{x}\rangle, \lim_{N \to \infty} \nu_{\mbf{a}_N})$. The \emph{injective traffic distribution} and \emph{convergence in injective traffic distribution} are defined in the obvious way. Naturally, the M\"{o}bius relation implies that convergence in traffic distribution is equivalent to convergence in injective traffic distribution.

Let $T = (V, E, \gamma) \in \mcal{T}\langle J \rangle$ be a test graph in the disjoint union $J = \sqcup_{i \in I} J_i$. We think of each of the subsets $J_i$ as a different color for the edges via the labeling $\gamma: E \to J$. A \emph{colored component} in $T$ is a connected component in the subgraph obtained from $T$ by only keeping the edges coming from a single color $\gamma^{-1}(J_i) \subset E$. We write $\mcal{CC}_i(T)$ for the set of $i$-colored components in $T$ and $\mcal{CC}(T) = \sqcup_{i \in I} \mcal{CC}_i(T)$ for the set of all colored components. The \emph{graph of colored components} $\mcal{GCC}(T)$ is the simple bipartite graph $(V \sqcup \mcal{CC}(T), E_{\mcal{GCC}(T)})$ where adjacency is determined by inclusion in $T$: if $v \in V$ and $C \in \mcal{CC}(T)$, then
\[
  v \sim_{\mcal{GCC}(T)} C \iff v \text{ is a vertex of } C.
\]
Random variables $\mbf{a} = \sqcup_{i \in I} \mbf{a}_i$ with $\mbf{a}_{i} = (a_{i, j})_{j \in J_i}$ are \emph{traffic independent} if
\[
  \nu_{\mbf{a}}^0[T] = \indc{\mcal{GCC}(T) \text{ is a tree}}\prod_{C \in \mcal{CC}(T)} \nu_{\mbf{a}}^0[C], \qquad \forall T \in \mcal{T}\langle J\rangle. 
\]
A sequence of families $(\mbf{a}_N)_{N \in \N}$ is \emph{asymptotically traffic independent} if it converges in traffic distribution to a limit $\mbf{x}$ that is traffic independent in $(\C\langle\mbf{x}\rangle, \lim_{N \to \infty} \nu_{\mbf{a}_N})$.
\end{defn}

Similarly, we focus on one specific distribution in the traffic framework. It will be helpful to introduce some notation beforehand.

\begin{notat}\label{notat:graphs}
Let $G = (V, E, \src, \tar)$ be a multidigraph. We distinguish between loops $\mcal{L} \subset E$ and non-loop edges $\mcal{N} = \mcal{L}^c$. We define $\wtilde{G} = (V, \wtilde{E})$ as the undirected graph obtained from $G$ by omitting the direction and multiplicity of the edges. Formally, $\wtilde{E} = \{[e] : e \in E\}$, where $[e] = \{e' \in E : e \sim e'\}$ and
\[
  e \sim e' \iff \{\src(e), \tar(e)\} = \{\src(e'), \tar(e')\}.
\]
The partition $E = \mcal{L} \sqcup \mcal{N}$ then drops to a partition of the quotient $\wtilde{E}= \wtilde{\mcal{L}} \sqcup \wtilde{\mcal{N}}$. In particular, we can write the underlying simple graph in $G$ as $\underline{G} = (V, \wtilde{\mcal{N}})$. If our graph comes with edge labels $\gamma: E \to I$, then we record the undirected multiplicity of a label $i$ in a class of edges $[e]$ with
\[
  m_{[e], i} = \#\big(\gamma^{-1}(\{i\}) \cap [e]\big).
\]
We use the notation $m_{[e]} = \#([e]) = \sum_{i \in I} m_{[e], i}$ for the total multiplicity of the edge $[e]$. See Figure \ref{fig:graphs} for an illustration.
\end{notat}

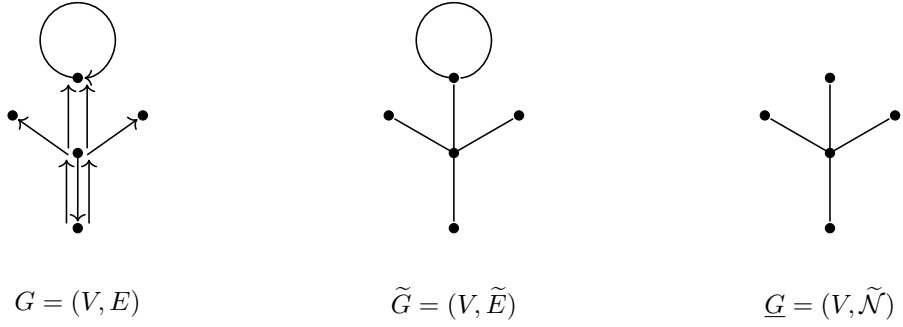
\begin{figure}
  \begin{tikzpicture}[baseline=(current  bounding  box.center), shorten > = 2.5pt]
    %% G
    \node at (0, -1) {$G = (V, E)$};
    %% vertices
    \draw[fill=black] (0, 0) circle (1.75pt);
    \draw[fill=black] (0, 1) circle (1.75pt);
    \draw[fill=black] (0, 2) circle (1.75pt);
    \draw[fill=black] (.866, 1.5) circle (1.75pt);
    \draw[fill=black] (-.866, 1.5) circle (1.75pt);
    %% edges
    \draw[semithick, ->] (.15,.0625) to (.15,1);
    \draw[semithick, ->] (0, 1) to (0,0);
    \draw[semithick, ->] (-.15,.0625) to (-.15, 1);
    
    \draw[semithick, ->] (.125,1.0625) to (.125, 2);
    \draw[semithick, ->] (-.125,1.0625) to (-.125, 2);

    \draw[semithick, ->] (.125,.975) to (.866, 1.5);

    \draw[semithick, ->] (-.125,.975) to (-.866, 1.5);

    \draw[semithick, ->] (0,2) arc (270:-90:.5cm);

    %% \wtilde{G}
    \node at (5, -1) {$\wtilde{G} = (V, \wtilde{E})$};
    %% vertices
    \draw[fill=black] (5, 0) circle (1.75pt);
    \draw[fill=black] (5, 1) circle (1.75pt);
    \draw[fill=black] (5, 2) circle (1.75pt);
    \draw[fill=black] (5.866, 1.5) circle (1.75pt);
    \draw[fill=black] (4.134, 1.5) circle (1.75pt);
    %% edges
    \draw[semithick, -] (5, 1) to (5,0);
    
    \draw[semithick, -] (5,1.0625) to (5, 2);

    \draw[semithick, -] (5,1) to (5.866, 1.5);

    \draw[semithick, -] (5,1) to (4.134, 1.5);

    \draw[semithick, -] (5,2) arc (270:-90:.5cm);

    %% \underline{G}
    \node at (10, -1) {$\underline{G} = (V, \wtilde{\mcal{N}})$};
    %% vertices
    \draw[fill=black] (10, 0) circle (1.75pt);
    \draw[fill=black] (10, 1) circle (1.75pt);
    \draw[fill=black] (10, 2) circle (1.75pt);
    \draw[fill=black] (10.866, 1.5) circle (1.75pt);
    \draw[fill=black] (9.134, 1.5) circle (1.75pt);
    %% edges
    \draw[semithick, -] (10, 1) to (10,0);
    
    \draw[semithick, -] (10,1.0625) to (10, 2);

    \draw[semithick, -] (10,1) to (10.866, 1.5);

    \draw[semithick, -] (10,1) to (9.134, 1.5);
  \end{tikzpicture}
\caption{An example of a multidigraph $G = (V, E)$, the undirected and multiplicity free graph $\wtilde{G} = (V, \wtilde{E})$, and the underlying simple graph $\underline{G} = (V, \wtilde{\mcal{N}})$.}\label{fig:graphs}
\end{figure}

\begin{eg}[Semicircular traffic family]\label{eg:semicircular_traffic_family}
A \emph{double tree} is a multidigraph $G = (V, E, \src, \tar)$ with no loops $\mcal{L} = \emptyset$ such that the underlying simple graph $\underline{G} = (V, \wtilde{\mcal{N}})$ is a tree with edge multiplicity $m_{[e]} = 2$ for every $[e] \in \wtilde{\mcal{N}}$. We call the edges $[e] = \{e, e'\} \in \wtilde{\mcal{N}}$ in a double tree \emph{twin edges}. Twin edges $\{e, e'\}$ are \emph{congruent} (resp., \emph{opposing}) if $\src(e) = \src(e')$ and $\tar(e) = \tar(e')$ (resp., $\src(e) = \tar(e')$ and $\tar(e) = \src(e')$).

Let $(\mbf{K}(i, i'))_{i, i' \in I}$ be a real positive semidefinite matrix and $(\mbf{J}(i, i'))_{i, i' \in I}$ a real symmetric matrix. A family of random variables $\mbf{s} = (s_i)_{i \in I}$ in a traffic space $(\mcal{A}, \tau)$ is a \emph{semicircular traffic family of covariance $\mbf{K}$ and pseudocovariance $\mbf{J}$} if
\begin{equation}\label{eq:semicircular_traffic_family}
\begin{aligned}
  \nu_{\mbf{s}}^0[T] = \indc{T \text{ is a double tree}}\prod_{[e] = \{e, e'\} \in \wtilde{\mcal{N}}} \Big(&\indc{\{e, e'\} \text{ are opposing}}\mbf{K}(\gamma(e), \gamma(e')) \\
  + &\indc{\{e, e'\} \text{ are congruent}}\mbf{J}(\gamma(e), \gamma(e'))\Big).
\end{aligned}
\end{equation}
In particular, the $(s_i)_{i \in I}$ are traffic independent iff $(\mbf{K}(i, i'))_{i, i' \in I}$ and $(\mbf{J}(i, i'))_{i, i' \in I}$ are diagonal.

A semicircular traffic family can be realized as follows. We define a traffic state on the algebra of random matrices $\mcal{A} = L^{\infty-}(\Omega, \mcal{F}, \pr) \otimes \op{Mat}_N(\C)$ by
\[
  \tau_N[T] = \frac{1}{N} \sum_{\phi: V \to [N]} \E\Big[\prod_{e \in E} \gamma(e)(\phi(\tar(e)), \phi(\src(e)))\Big], \qquad \forall T \in \mcal{T}\langle\mcal{A}\rangle.
\]
In particular, note that $\varphi_{\tau_N}(\cdot) = \E[\tr(\cdot)]$. For notational convenience, we abbreviate
\[
  (\phi(e)) := (\phi(\tar(e)), \phi(\src(e))).
\]
The injective traffic state admits a particularly simple form in this case: 
\begin{equation}\label{eq:injective_traffic_state_matrix}
  \tau_N^0[T] = \frac{1}{N} \sum_{\phi: V \hookrightarrow [N]} \E\Big[\prod_{e \in E} \gamma(e)(\phi(e))\Big], \qquad \forall T \in \mcal{T}\langle\mcal{A}\rangle,
\end{equation}
where $\phi: V \hookrightarrow [N]$ denotes an injective map. A result of Male proves that independent Wigner matrices $(\mbf{W}_N^{(i)})_{i \in I}$ converge in traffic distribution to a semicircular traffic family of covariance $\mbf{K}(i, i') = \indc{i = i'}$ and pseudocovariance $\mbf{J}(i, i') = \beta_i \indc{i = i'}$ \cite[Proposition 3.1]{Mal20}. See \cite{Au18a,Au20} for additional work on Wigner matrices and their generalizations in the traffic framework.
\end{eg}

As the name suggests, a semicircular traffic family $\mbf{a} = (a_i)_{i \in I} \subset (\mcal{A}, \tau)$ of covariance $\mbf{K}$ and pseudocovariance $\mbf{J}$ defines a semicircular family of covariance $\mbf{K}$ in $(\mcal{A}, \varphi_\tau)$. This follows from the cactus-cumulant correspondence \cite[\S 3.1]{AM20}; however, the converse does not hold. For example, Dykema's result for independent Wigner matrices $(\mbf{W}_N^{(i)})_{i \in I}$ holds for arbitrary pseudovariances $(\beta_i)_{i \in I} \subset \D$, whereas the case of strictly complex $\beta_i$ does not have the multiplicative structure of \eqref{eq:semicircular_traffic_family} if there are any congruent twin edges \cite[Equation (3.16)]{Au18a}. In particular, note that the distribution $\mu_{\mbf{a}}$ does not require the full knowledge of the traffic distribution $\nu_{\mbf{a}}$. Indeed, the definition \eqref{eq:traffic_to_ncps} only requires the information of $\nu_{\mbf{a}}$ on simple directed cycles, which we can compute with the information of $\nu_{\mbf{a}}^0$ on any test graph that can be obtained as a quotient of a simple directed cycle. One can easily verify that a double tree obtained as a quotient of a simple directed cycle can only have opposing twin edges \cite[Figure 5]{Au18a}. Since the pseudovariances $(\beta_i)_{i \in I}$ only appear in \eqref{eq:semicircular_traffic_family} alongside congruent twin edges, the limiting traffic distribution of $(\mbf{W}_N^{(i)})_{i \in I}$ for general $(\beta_i)_{i \in I} \subset \D$ still defines a semicircular family of covariance $\mbf{K}(i, i') = \indc{i = i'}$ in $(\mcal{A}, \varphi_\tau)$.

The preceding paragraph outlines the strategy we adopt in the proof of Corollary \ref{cor:permuted_nc}. After proving Theorem \ref{thm:permuted_traffic}, we will analyze the relevant contributions from the traffic distribution to the usual distribution. This restriction to test graphs arising as quotients of simple directed cycles explains the difference between the conditions in Theorem \ref{thm:permuted_traffic} and Corollary \ref{cor:permuted_nc}.

\section{Proofs of the main results}
\subsection{Asymptotics for the injective traffic distribution of $(\mbf{M}_N^{(i)})_{i \in I}$}
We prove Theorem \ref{thm:permuted_traffic} in steps. First, we show that staying off the grid allows us to restrict our attention to double trees.

\begin{lemma}\label{lem:double_tree}
Let $\mcal{M}_N = (\mbf{M}_N^{(i)})_{i \in I}$. Suppose that $(\sigma_N^{(i')})^{-1} \circ \sigma_N^{(i)}$ stays off the grid for every $i, i' \in I$. In that case, if $T \in \mcal{T}\langle I \rangle$ is not a double tree, then $\nu_{\mcal{M}_N}^0[T] = o_T(1)$.
\end{lemma}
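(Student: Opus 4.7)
The plan is to expand $\nu_{\mcal{M}_N}^0[T]$ via the matrix-level formula \eqref{eq:injective_traffic_state_matrix}, group the resulting sum by the partition $\pi(\phi)\in\mcal{P}(E)$ recording which factors $\mbf{X}_N(\sigma_N^{\gamma(e)}(\phi(e)))$ coincide (modulo transpose), and apply the grid hypothesis to kill every contribution except the ones coming from double trees. Writing out the Wigner scaling yields
\[
  \nu_{\mcal{M}_N}^0[T] = \frac{1}{N^{1+|E|/2}}\sum_{\phi:V\hookrightarrow[N]}\E\Bigl[\prod_{e\in E}\mbf{X}_N\bigl(\sigma_N^{\gamma(e)}(\phi(e))\bigr)\Bigr].
\]
By the centering and independence in Definition~\ref{defn:wigner}(i)--(ii), only $\phi$ inducing a partition $\pi(\phi)$ with all blocks of size $\geq 2$ contribute, and by the moment bound (iii) the expectation is uniformly $O_T(1)$. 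A standard ``excess-multiplicity'' estimate shows that partitions with a block of size $\geq 3$ are $O_T(N^{-1/2})$ smaller than pair partitions, so I restrict attention to pair partitions $\pi$ of $E$.

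For each pair $\{e,e'\}\in\pi$, I set $\tau := (\sigma_N^{\gamma(e)})^{-1}\circ \sigma_N^{\gamma(e')}$ (a symmetric permutation) and rewrite the coincidence as $\phi_E(e)\in\{\tau(\phi_E(e')),\tau(\phi_E(e'))^\intercal\}$. Classifying each pair by the location of $\tau(\phi_E(e'))$ relative to $\phi_E(e')$ yields three regimes: \emph{identifying} ($\tau$-image in $\{\phi_E(e'),\phi_E(e')^\intercal\}$), \emph{gridded} ($\tau$-image in $\op{Grid}(\phi_E(e'))$), or \emph{free} ($\tau$-image sharing no coordinate). Injectivity of $\phi$ then forces identifying pairs to be parallel multi-edges of $T$, gridded pairs to share exactly one vertex, and free pairs to have vertex-disjoint endpoints. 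The grid hypothesis immediately disposes of the gridded regime: any pair partition containing a gridded pair confines the corresponding $\phi_E(e')$ to the $o(N^2)$-sized set $\op{Grid}(\tau)$, yielding $o_T(1)$ after the $N^{-1-|E|/2}$ scaling.

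The remaining case is a pair partition $\pi$ in which every pair is identifying or free; write $a$ and $c$ for the two counts, so $a+c=|E|/2$. A direct bookkeeping---each free pair, given $\phi$ on two of its four endpoints, determines $\phi$ on the other two up to a transpose---yields the estimate $\#\{\phi:\pi(\phi)\supseteq\pi\}\leq C_T\,N^{|V|-2c}$. The lemma then reduces to the Euler-type inequality
\[
  |V|-2c\leq 1+|E|/2,\qquad\text{equality iff } c=0 \text{ and } T \text{ is a double tree with its twin-edge pairing.}
\]
For $c=0$ this is precisely the classical Wigner bound applied to $T$ (equality forces the underlying simple graph to be a tree with all multiplicities equal to two). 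For $c\geq 1$, connectedness ($|V|\leq |E|+1$), the vertex-incidence bookkeeping $2a+4c\geq|V|$ (every vertex lies in at least one pair), and the disjointness of free-pair endpoints combine to preclude equality. The main obstacle is this $c\geq 1$ case: free pairs do not identify vertices but rather impose algebraic dependencies coupling $\phi$-values at four distinct vertices, and in a connected graph these pairs must share endpoints once $c\geq 1$, so the dependencies interact in ways requiring careful graph surgery to control. I plan to reduce this step to the classical Wigner inequality applied to an auxiliary connected multigraph $T_\pi$ on $|V|-2c$ vertices and $|E|/2$ edges, obtained by performing the vertex identifications that each free pair imposes (up to the transpose ambiguity), whence the equality analysis unravels to $c=0$ and $T$ a double tree.
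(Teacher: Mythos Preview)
Your overall strategy---organize by the induced pair partition $\pi(\phi)$ of $E$, classify each pair as identifying, gridded, or free according to where $\tau(\phi_E(e'))$ lands relative to $\phi_E(e')$, and use the grid hypothesis to kill gridded pairs---is close in spirit to the paper's exploration argument. But the bookkeeping step
\[
  \#\{\phi:\pi(\phi)\supseteq\pi\}\leq C_T\,N^{|V|-2c}
\]
is false as stated, and this breaks the rest of the argument. The $2c$ ``equations'' coming from the free pairs need not be independent: when the four-vertex supports of distinct free pairs overlap, the constraints can be redundant and the solution count can exceed $N^{|V|-2c}$. Concretely, take $T$ the path $v_0\text{--}v_1\text{--}v_2\text{--}v_3\text{--}v_4$ with edges $e_1,e_2,e_3,e_4$, and $\pi=\{\{e_1,e_3\},\{e_2,e_4\}\}$ (both pairs are free). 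If every $\tau$ equals the symmetric permutation $\rho(j,k)=(j+1,k+1)\bmod N$, which stays off the grid, then choosing $(\phi(v_0),\phi(v_1))$ freely determines $(\phi(v_2),\phi(v_3))$ via the first pair, and the second pair is then \emph{automatically} satisfied and fixes $\phi(v_4)$. So the count is $\Theta(N^2)$, whereas $|V|-2c=5-4=1$. Your planned quotient $T_\pi$ likewise does not have $|V|-2c$ vertices here: the identifications $v_0\sim v_2\sim v_4$ and $v_1\sim v_3$ leave two classes, not one.

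This redundancy phenomenon is exactly what the paper's proof is designed to handle. Rather than asserting a global $N^{|V|-2c}$ bound, the paper runs an exploration of the underlying simple graph $\underline{G}$, obtaining the runtime bound $N^{1+\#(\widetilde{\mcal N}_{\geq 2})+\#(\widetilde{\mcal N}_1)/2}$ and then arguing inductively that any matching of non-adjacent singleton edges either forces an \emph{adjacent} matched singleton pair somewhere along the connecting path (which is precisely a gridded constraint, killed by the hypothesis) or wastes an exploration step and falls strictly below the runtime bound. In the path example above this gives $O(N^2)=o(N^3)$, which is what is needed. Your disposal of the gridded regime is also too quick for the same reason: restricting a single $\phi_E(e')$ to an $o(N^2)$ set does not by itself yield $o(N^{1+|E|/2})$ without first controlling the remaining degrees of freedom. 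To repair your approach you would essentially have to reproduce the paper's pinching/exploration analysis, which is where the actual work lies.
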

\begin{proof}
For a test graph $T = (V, E, \gamma) \in \mcal{T}\langle I \rangle$, the injective traffic distribution can be written as
\begin{equation}\label{eq:injective_permuted_wigner}
\begin{aligned}
  \nu_{\mcal{M}_N}^0[T] &= \frac{1}{N} \sum_{\phi: V \hookrightarrow [N]} \E\Big[\prod_{e \in E} \mbf{M}_N^{(\gamma(e))}(\phi(e))\Big] \\
              &= \frac{1}{N} \sum_{\phi: V \hookrightarrow [N]} \E\Big[\prod_{e \in E} \mbf{W}_N(\sigma_N^{(\gamma(e))}(\phi(e)))\Big] \\
              &= \frac{1}{N^{1 + \frac{\#(E)}{2}}}\sum_{\phi: V \hookrightarrow [N]} \E\Big[\prod_{e \in E} \mbf{X}_N(\sigma_N^{(\gamma(e))}(\phi(e)))\Big].
\end{aligned}
\end{equation}
Our strong moment assumption \eqref{eq:moment_bound} implies that
\[
  \E\Big[\prod_{e \in E} \mbf{X}_N(\sigma_N^{(\gamma(e))}(\phi(e)))\Big] = O_T(1)
\]
uniformly in $N$ and $\phi$. We immediately obtain the elementary bound
\[
  \nu_{\mcal{M}_N}^0[T] = O_T(N^{\#(V) - (1 + \frac{\#(E)}{2})}),
\]
which is quite far from what we need. To this end, we define the sets 
\begin{align*}
  \wtilde{\mcal{N}}_1 &= \{[e] \in \wtilde{\mcal{N}} : m_{[e]} = 1\}; \\
  \wtilde{\mcal{N}}_{\geq 2} &= \{[e] \in \wtilde{\mcal{N}} : m_{[e]} \geq 2\}; \\
  \mcal{N}_1 &= \{e \in \mcal{N} : [e] \in \wtilde{\mcal{N}}_1\}; \\
  \mcal{N}_{\geq 2} &= \{e \in \mcal{N} : [e] \in \wtilde{\mcal{N}}_{\geq 2}\},
\end{align*}
where we recall that $E = \mcal{L} \sqcup \mcal{N}$. Note that
\begin{equation}\label{eq:simple_graph_edge_bounds}
\begin{aligned}
  \#(\mcal{N}_1) &= \#(\wtilde{\mcal{N}}_1); \\
  \#(\mcal{N}_{\geq 2}) &\geq  2\#(\wtilde{\mcal{N}}_{\geq 2}).
\end{aligned}
\end{equation}

Now, suppose that there exists an edge $e_0 \in \mcal{N}_1$. Since the elements $(\mbf{X}_N(j, k))_{1 \leq j < k \leq N}$ are independent and centered, the injectivity of the map $\phi$ implies that
\[
  \E\Big[\prod_{e \in E} \mbf{X}_N(\sigma_N^{(\gamma(e))}(\phi(e)))\Big] = 0
\]
unless there exists another edge $e_1 \in E$ such that
\begin{equation}\label{eq:singleton_meetup}
  \sigma_N^{(\gamma(e_1))}(\phi(e_1)) \in \{\sigma_N^{(\gamma(e_0))}(\phi(e_0)), \sigma_N^{(\gamma(e_0))}(\phi(e_0))^\intercal\}.
\end{equation}
Since $m_{[e_0]} = 1$, we know that $[e_1] \neq [e_0]$. Furthermore, since the permutations $(\sigma_N^{(i)})_{i \in I}$ are symmetric, it must be that $e_1 \in \mcal{N}$. Applying the inverse, \eqref{eq:singleton_meetup} is equivalent to
\begin{equation}\label{eq:expanded_singleton_meetup}
\begin{aligned}
  (\phi(\tar(e_1)), \phi(\src(e_1))) \in \{&(\sigma_N^{(\gamma(e_1))})^{-1} \circ \sigma_N^{(\gamma(e_0))}(\phi(\tar(e_0)), \phi(\src(e_0))), \\
  &(\sigma_N^{(\gamma(e_1))})^{-1} \circ \sigma_N^{(\gamma(e_0))}(\phi(\src(e_0)), \phi(\tar(e_0)))\}.
\end{aligned}
\end{equation}
In particular, $\{\phi(\tar(e_1)), \phi(\src(e_1))\}$ is completely determined by $\{\phi(\tar(e_0)), \phi(\src(e_0))\}$ and $\{\gamma(e_0), \gamma(e_1)\}$. This means we no longer have the freedom to choose the value of $\phi$ on $\{\tar(e_1), \src(e_1)\}$. Since $[e_1] \neq [e_0]$, we know that
\[
  \op{overlap}([e_0], [e_1]) := \#\big(\{\tar(e_0), \src(e_0)\} \cap \{\tar(e_1), \src(e_1)\}\big) \leq 1 .
\]
The singleton edge $[e_0]$ then reduces the number of degrees of freedom in choosing our map $\phi$ by
\[
  2 - \op{overlap}([e_0], [e_1]) \geq 1.
\]
This is true of every singleton edge in $\underline{G} = (V, \wtilde{\mcal{N}})$; however, it could be that the matching edge $[e_1]$ is also a singleton $m_{[e_1]} = 1$.

Let
\[
  A_N = \left\{\phi: V \hookrightarrow [N] : \E\Big[\prod_{e \in E} \mbf{X}_N(\sigma_N^{(\gamma(e))}(\phi(e)))\Big] \neq 0\right\}.
\]
We refine the obvious bound $\#(A_N) = O(N^{\#(V)})$ by exploring the underlying simple graph $\underline{G} = (V, \wtilde{\mcal{N}})$ with the restriction from the previous paragraph in mind. We keep track of the vertices (resp., edges) we have encountered so far in our exploration with the set $V_{\op{enc}}$ (resp., $\wtilde{\mcal{N}}_{\op{enc}}$), updating it as we go. We think of $V_{\op{enc}}$ as the set of vertices for which we have already assigned the values $\phi|_{V_{\op{enc}}}$. Of course, since $T$ is connected, we know that $\underline{G}$ is still connected. We start with $(V_{\op{enc}}, \wtilde{\mcal{N}}_{\op{enc}}) = (\emptyset, \emptyset)$ and choose an arbitrary initial vertex $v_0 \in V$, for which there are at most $N$ choices of $\phi(v_0) \in [N]$. Having made this choice, we update $V_{\op{enc}} = V_{\op{enc}} \cup \{v_0\}$.

At any given stage, we have a list of encountered vertices $V_{\op{enc}}$ and encountered edges $\wtilde{\mcal{N}}_{\op{enc}}$. If there are no remaining edges $\wtilde{\mcal{N}} \setminus \wtilde{\mcal{N}}_{\op{enc}} = \emptyset$, then we stop. Otherwise, there is an edge $[e] \in \wtilde{\mcal{N}}\setminus\wtilde{\mcal{N}}_{\op{enc}}$ incident to a vertex $v \in V_{\op{enc}}$. Let $w \in \{\src(e), \tar(e)\}\setminus\{v\}$ be the other vertex incident to this edge. Again, we have at most $N$ choices for $\phi(w)$; however, it could be that we already encountered $w$ earlier (for example, $\underline{G}$ is not necessarily a tree). In the latter case, the choice of $\phi(w)$ has already been made and we do not get to choose again. In any event, we update $V_{\op{enc}} = V_{\op{enc}} \cup \{w\}$ and $\wtilde{\mcal{N}}_{\op{enc}} = \wtilde{\mcal{N}}_{\op{enc}} \cup \{[e]\}$. If $[e]$ is further a singleton edge, then we know that $\phi$ matches $[e]$ to another edge $[e']$ as in \eqref{eq:expanded_singleton_meetup} and we lose the freedom to choose the value(s) $\phi|_{\{\tar(e'),\src(e')\}\setminus\{\tar(e),\src(e)\}}$. Of course, it could be that $(\{\tar(e'),\src(e')\}\setminus\{\tar(e),\src(e)\}) \cap V_{\op{enc}} \neq \emptyset$. In any event, for any edge $e'$ matched to $e$ by $\phi$ (possibly more than one), we update $V_{\op{enc}} = V_{\op{enc}} \cup \{\tar(e'),\src(e')\}$ and $\wtilde{\mcal{N}}_{\op{enc}} = \wtilde{\mcal{N}}_{\op{enc}} \cup \{[e']\}$ and start the procedure again.

The procedure stops once we have explored all of the edges $\wtilde{\mcal{N}}_{\op{enc}} = \wtilde{\mcal{N}}$. Since $\underline{G}$ is connected, this means that we will have explored all of the vertices $V_{\op{enc}} = V$. In other words, we will have constructed a function $\phi: V \hookrightarrow [N]$. To bound the number of possible functions constructed in this manner, note that every exploration of an edge contributes a factor of at most $N$; however, if the edge happens to be a singleton edge, it automatically explores at least one other edge at no cost (i.e., we do not count this induced exploration toward the runtime). In the worst case scenario (i.e., longest total exploration time), a singleton edge explores exactly one other edge, which itself is also a singleton edge, using up an edge that would have given us a free exploration. Counting the runtime of the procedure then gives the bound
\begin{equation}\label{eq:runtime_function_bound}
  \#(A_N) = O_T(N^{1 + \#(\wtilde{\mcal{N}}_{\geq 2}) + \frac{\#(\wtilde{\mcal{N}}_1)}{2}}).
\end{equation}
Of course, this is only a refinement of our earlier bound if $1 + \#(\wtilde{\mcal{N}}_{\geq 2}) + \frac{\#(\wtilde{\mcal{N}}_1)}{2} < \#(V)$, which is not true in general. Nevertheless, putting the runtime bound \eqref{eq:runtime_function_bound} back into the injective traffic distribution \eqref{eq:injective_permuted_wigner}, we have the asymptotic
\[
  \nu_{\mcal{M}_N}^0[T] = O_T(N^{\#(\wtilde{\mcal{N}}_{\geq 2}) + \frac{\#(\wtilde{\mcal{N}}_1)}{2} - \frac{\#(E)}{2}}).
\]
We can rewrite the exponent as
\begin{align*}
  \#(\wtilde{\mcal{N}}_{\geq 2}) + \frac{\#(\wtilde{\mcal{N}}_1)}{2} - \frac{\#(E)}{2} &= \#(\wtilde{\mcal{N}}_{\geq 2}) + \frac{\#(\wtilde{\mcal{N}}_1)}{2}- \frac{\#(\mcal{N}_1) + \#(\mcal{N}_{\geq 2}) + \#(\mcal{L})}{2} \\
                                                                     &= \frac{2\#(\wtilde{\mcal{N}}_{\geq 2}) - \#(\mcal{N}_{\geq 2})}{2} - \frac{\#(\mcal{L})}{2}.
\end{align*}
So, in view of \eqref{eq:simple_graph_edge_bounds}, we can already assume that $\mcal{L} = \emptyset$ and $\#(\mcal{N}_{\geq 2}) = 2\#(\wtilde{\mcal{N}}_{\geq 2})$, the last equality implying that $m_{[e]} \in \{1, 2\}$ for every $[e] \in \wtilde{\mcal{N}}$.

Even so, we see that the injective traffic distribution is only just surviving the normalization: anything strictly less than runtime bound would result in $\nu_{\mcal{M}_N}^0[T] = o_T(1)$. In fact, the runtime bound already implicitly assumes some optimality in the matching between singleton edges, which we explain now. Suppose that there are singleton edges $[e]$ and $[e']$ matched by $\phi$ in the exploration procedure. For concreteness, assume that $[e]$ is the edge that we explore and $[e']$ is the edge that gets matched to $[e]$ as a result of the free exploration induced by $[e]$. Let $A_{N, \phi} \subset A_N$ be the subset of vertex labelings with the same matching of the edges as $\phi$. Note that it is possible for $A_{N, \phi} = A_{N, \phi'}$ with $\phi \neq \phi'$. In particular, there are only finitely many different matchings of the edges: this number only depends on the finite graph $T$ and is independent of $N$.

Since $\underline{G}$ is connected, we know that there exists a path $([e_1], \ldots, [e_k])$ in $\underline{G}$ with $[e_1] = [e]$ and $[e_k] = [e']$: by a path, we mean a walk with no repetition of vertices. We claim that for any such path, there must exist singleton edges $[e_j]$ and $[e_{j+1}]$ along this path that are matched by $\phi$ unless $\#(A_{N, \phi}) = o_T(N^{\#(\wtilde{\mcal{N}}_{\geq 2}) + \frac{\#(\wtilde{\mcal{N}}_1)}{2} - \frac{\#(E)}{2}})$. In other words, there must exist adjacent singleton edges along this path that get matched if the injective traffic distribution is to survive the normalization. We prove this by induction on the length $k$ of the path. The base case $k = 2$ is trivial since $[e_1]$ and $[e_2]$ will be adjacent by definition. Suppose that $k > 2$ and we know the result for all shorter lengths. In that case, $[e_1]$ and $[e_k]$ are not adjacent, meaning 
\[
  \op{overlap}([e_1], [e_k]) = 0. 
\]
But then the choice of $\phi|_{\{\tar(e_1), \src(e_1)\}}$ determines the value of $\phi$ on \emph{two} other vertices $\phi|_{\{\tar(e_k)), \src(e_k)\}}$. We choose to bound $\#(A_{N, \phi})$ by starting the exploration procedure at a vertex incident to $[e_1]$ and immediately getting matched to $[e_k]$. We can then explore the intermediate path $([e_2], \ldots, [e_{k-1}])$; however, by the time we explore $[e_{k-1}]$, we do not get to choose the value of the vertex we encounter at this stage since it was already encountered earlier by the free exploration coming from $[e_k]$. This would drop a factor of $N$ from our runtime bound \emph{unless} the exploration of $[e_{k-1}]$ was a free exploration coming from another singleton edge $[e'']$. Furthermore, the optimality of the runtime bound says that a singleton edge should only get matched to exactly one other (singleton) edge, so $[e_{k-1}]$ itself must be a singleton edge. Since we have only explored the edges along the path $([e_1], \ldots, [e_k])$, it must be that $[e''] = [e_{k''}]$ for some $1 < k'' < k-1$. In that case, $([e_{k''}], \ldots, [e_{k-1}])$ is a shorter path with singleton edges $[e_{k''}]$ and $[e_{k-1}]$ matched by $\phi$ and we can apply the induction hypothesis. 

We use this to modify our test graph to keep track of the dependences on the vertex labelings $\phi$ induced by the matchings. If $[e]$ and $[e']$ are adjacent singleton edges that are matched by $\phi$, then we can record the reduction in the degrees of freedom in choosing $\phi$ by identifying the vertices $\{\tar(e'),\src(e')\}\setminus\{\tar(e),\src(e)\}$ and $\{\tar(e),\src(e)\}\setminus\{\tar(e'),\src(e')\}$ (we can think of pinching the edges $[e]$ and $[e']$ together). From the point of view of our refined runtime bound above, this removes a pair of adjacent singleton edges along the path from $[e_1]$ to $[e_k]$. We can apply the same reasoning to this new test graph to continue pinching possibly newly adjacent singleton edges matched by $\phi$ until there are no more singleton edges. See Figure \ref{fig:pinching} for an illustration.

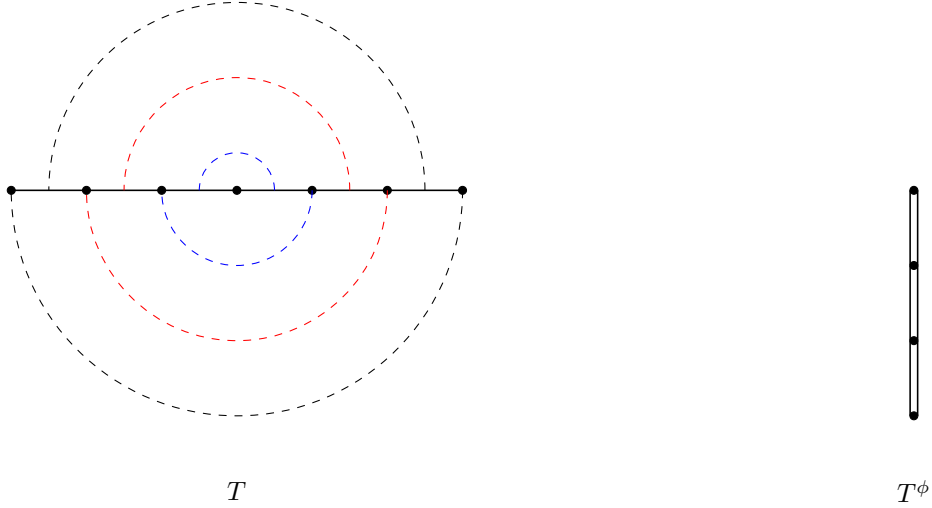
\begin{figure}
  \begin{tikzpicture}[baseline=(current  bounding  box.center)]
    %% T
    \node at (3, -4) {$T$};
    
    %% vertices
    \draw[fill=black] (0, 0) circle (1.5pt);
    \draw[fill=black] (1,0) circle (1.5pt);
    \draw[fill=black] (2, 0) circle (1.5pt);
    \draw[fill=black] (3, 0) circle (1.5pt);
    \draw[fill=black] (4, 0) circle (1.5pt);
    \draw[fill=black] (5, 0) circle (1.5pt);
    \draw[fill=black] (6, 0) circle (1.5pt);

    %% edges
    \draw[semithick, -] (0, 0) to (1, 0);
    \draw[semithick, -] (1, 0) to (2, 0);
    \draw[semithick, -] (2, 0) to (3, 0);
    \draw[semithick, -] (3, 0) to (4, 0);
    \draw[semithick, -] (4, 0) to (5, 0);
    \draw[semithick, -] (5, 0) to (6, 0);

    %% matchings
    \draw[dashed] (5.5,0) arc (0:180:2.5);
    \draw[red, dashed] (4.5,0) arc (0:180:1.5);
    \draw[blue, dashed] (3.5,0) arc (0:180:.5);

    %% vertex matchings
    \draw[dashed] (6,0) arc (0:-180:3);
    \draw[red, dashed] (5,0) arc (0:-180:2);
    \draw[blue, dashed] (4,0) arc (0:-180:1);

    %% T^\phi
    \node at (12, -4) {$T^\phi$};
    
    %% vertices
    \draw[fill=black] (12, 0) circle (1.5pt);
    \draw[fill=black] (12, -1) circle (1.5pt);
    \draw[fill=black] (12, -2) circle (1.5pt);
    \draw[fill=black] (12, -3) circle (1.5pt);

    %% edges
    \draw[semithick, -] (11.95, 0) to (11.95, -1);
    \draw[semithick, -] (12.05, 0) to (12.05, -1);
    \draw[semithick, -] (11.95, -1) to (11.95, -2);
    \draw[semithick, -] (12.05, -1) to (12.05, -2);
    \draw[semithick, -] (11.95, -2) to (11.95, -3);
    \draw[semithick, -] (12.05, -2) to (12.05, -3);

  \end{tikzpicture}
\caption{An example of the successive pinching of adjacent singleton edges starting with $T$ and resulting in $T^\phi$. The dashed lines above the edges indicate the edge matchings induced by $\phi$. The dashed lines below the edges indicate the corresponding vertices that are identified as a result of the pinching. For simplicity, the test graphs are drawn with undirected edges.}\label{fig:pinching}
\end{figure}

We explain the intuition behind this result. A matching between singleton edges that are not adjacent is costly because it explores an additional two vertices. If we incur such a matching, we need to proceed optimally to meet the upper bound of the runtime procedure. This can only be achieved if we use free explorations to bridge between the non-adjacent singleton edges. If we think of building a bridge by laying down edges from both sides, then the adjacent singleton edges that get matched are where the two sides meet.

Now, assume that $\phi$ is such an optimal matching. Let $T^\phi = (V^\phi, E^\phi, \gamma^\phi)$ be the test graph obtained from $T$ by pinching all the adjacent singleton edges matched by $\phi$ iteratively. Formally, we obtain $T^\phi$ from $T$ as follows. First, we pinch the adjacent singleton edges matched by $\phi$. Note that this might create newly adjacent singleton edges that are matched by $\phi$, in which case we repeat the pinching process. Our analysis above shows that we will eventually stop, at which point we no longer have any singleton edges. Of course, $\#(E^\phi) = \#(E)$, and we can rephrase our refined runtime bound as $\#(A_{N, \phi}) = O_T(N^{\#(V^\phi)})$.

We claim that $\nu_{\mcal{M}_N}^0[T] = o_T(1)$ unless there exists a matching induced by a vertex labeling $\phi$ such that $T^\phi$ is a double tree. Indeed, by our earlier work, we already know that $\nu_{\mcal{M}_N}^0[T] = o_T(1)$ unless $\mcal{L}^\phi = \emptyset$ and $\#(\mcal{N}_{\geq 2}^\phi) = 2\#(\wtilde{\mcal{N}}_{\geq 2}^\phi)$, where the superscript corresponds to the constructions from before, but now for the test graph $T^\phi$. The pinching implies that we no longer have any singleton edges, so $\wtilde{\mcal{N}}_1^\phi = \emptyset$. Under these conditions, $T^\phi$ is a double tree iff $\#(V^\phi) = \#(\wtilde{\mcal{N}}^\phi) + 1$. To finish, assume that $\#(V^\phi) < \#(\wtilde{\mcal{N}}^\phi) + 1$. In that case,
\[
  \#(A_{N, \phi}) = O_T(N^{\#(V^\phi)}) = O_T(N^{\#(\wtilde{\mcal{N}}^\phi)})
\]
and the contributions to $\nu_{\mcal{M}_N}^0[T]$ from this class of vertex labelings is 
\[
  O_T(N^{\#(\wtilde{\mcal{N}}^\phi) - 1 - \frac{\#(E)}{2}}) = O_T(N^{\#(\wtilde{\mcal{N}}^\phi) - 1 - \frac{\#(E^\phi)}{2}}) = O_T(N^{-1}).
\]
Since there are only a finite class of matchings of $T$, the claim follows.

We summarize our progress so far:
\[
  \nu_{\mcal{M}_N}^0[T] = o_T(1)
\]
unless $\mcal{L} = \emptyset$, $\#(\mcal{N}_{\geq 2}) = \#(\mcal{N}_2)$, and there exists a matching of singleton edges such that the resulting pinched test graph is a double tree. In particular, if there are no singleton edges to pinch, then the test graph is already a double tree. Moreover, the pinching involves a sequence of successively adjacent singleton edges. We finish by using the grid condition to rule out the possibility of pinching.

If $[e]$ and $[e']$ are adjacent singleton edges matched by $\phi$, then we can write the condition in \eqref{eq:expanded_singleton_meetup} for this pair as
\begin{equation}\label{eq:adjacent_singleton_matching}
  (\phi(\tar(e)), \phi(\src(e))) \in \op{Grid}((\sigma_N^{\gamma(e')})^{-1} \circ \sigma_N^{\gamma(e)}).
\end{equation}
By assumption, $(\sigma_N^{(i')})^{-1} \circ \sigma_N^{(i)}$ stays off the grid for every $i, i' \in I$, which implies that
\[
  \#(A_{N, \phi}) = o_T(N^{1 + \#(\wtilde{\mcal{N}}_{\geq 2}) + \frac{\#(\wtilde{\mcal{N}}_1)}{2}}).
\]
Indeed, we can start our procedure at a vertex incident to $[e]$ and immediately get matched to $[e']$: staying off the grid implies that the total cost at this point is $o(N^2)$ as opposed to our earlier $O(N^2)$. We conclude that
\[
  \nu_{\mcal{M}_N}^0[T] = o_T(1)
\]
unless there are no singleton edges to match, as was to be shown.
\end{proof}

We see that staying off the grid restricts the possible support of our limiting injective traffic distribution to double trees. The actual calculation on double trees requires our other assumption about the homogeneity of the fixed points (resp., transposed points).

\begin{lemma}\label{lem:homogeneity_traffic}
Suppose that we have the limits
\begin{equation}\label{eq:lemma_proportion_limits}
\begin{aligned}
  \lim_{N \to \infty} \max_{j \in [N]} \frac{\#\Big(\text{\emph{FP}}\big((\sigma_N^{(i')})^{-1} \circ \sigma_N^{(i)}, j\big)\Big)}{N} = \lim_{N \to \infty} \min_{j \in [N]} \frac{\#\Big(\text{\emph{FP}}\big((\sigma_N^{(i')})^{-1} \circ \sigma_N^{(i)}, j\big)\Big)}{N} &= a_{i, i'}; \\
  \lim_{N \to \infty} \max_{j \in [N]} \frac{\#\Big(\text{\emph{TP}}\big((\sigma_N^{(i')})^{-1} \circ \sigma_N^{(i)}, j\big)\Big)}{N} = \lim_{N \to \infty} \min_{j \in [N]} \frac{\#\Big(\text{\emph{TP}}\big((\sigma_N^{(i')})^{-1} \circ \sigma_N^{(i)}, j\big)\Big)}{N} &= b_{i, i'}.
\end{aligned}
\end{equation}
In that case, if $T \in \mcal{T}\langle I \rangle$ is a double tree, then
\[
\begin{aligned}
  \lim_{N \to \infty} \nu_{\mcal{M}_N}^0[T] = \prod_{[e] = \{e, e'\} \in \wtilde{\mcal{N}}} \Big(&\indc{\{e, e'\} \text{ \emph{are opposing}}}(a_{\gamma(e), \gamma(e')}+ b_{\gamma(e), \gamma(e')}\beta) \\
  + &\indc{\{e, e'\} \text{ \emph{are congruent}}}(a_{\gamma(e), \gamma(e')}\beta + b_{\gamma(e), \gamma(e')})\Big).
\end{aligned}
\]
%In other words, $\mcal{M}_N$ converges in traffic distribution to a semicircular traffic family of covariance $\mbf{K}(i, i') = a_{i, i'}+ b_{i, i'}\beta$ and pseudocovariance $\mbf{J}(i, i') = a_{i, i'}\beta + b_{i, i'}$.
\end{lemma}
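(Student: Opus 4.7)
The plan is to evaluate the injective traffic distribution on the double tree $T$ directly from \eqref{eq:injective_permuted_wigner}, restricting the non-vanishing terms using the independence and centering of $\mbf{X}_N$, and then factorizing the remaining sum edge by edge via a tree traversal that leverages the uniform (max $=$ min) homogeneity hypothesis.

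First, since $T$ is a double tree, the edge set $E$ partitions into $\#(V) - 1$ twin pairs $[e] = \{e, e'\}$, each either opposing or congruent. Because the off-diagonal entries $(\mbf{X}_N(j, k))_{j < k}$ are independent and centered, and because injectivity of $\phi$ together with the symmetry of each $\sigma_N^{(i)}$ keeps every permuted position $\sigma_N^{(\gamma(e))}(\phi(e))$ off-diagonal, the expectation in \eqref{eq:injective_permuted_wigner} vanishes unless, for each twin pair, the two permuted positions agree as unordered pairs. Writing $\rho = \rho_{e, e'} = (\sigma_N^{(\gamma(e'))})^{-1} \circ \sigma_N^{(\gamma(e))}$ and $(a, b) = \phi(e)$, a short analysis using $\sigma \circ \intercal = \intercal \circ \sigma$ identifies exactly four non-vanishing scenarios and their per-pair expected contributions: opposing with $(a, b) \in \op{FP}(\rho)$ yields $\E[|\mbf{X}_N|^2] = 1$; opposing with $(a, b) \in \op{TP}(\rho)$ yields $\E[\mbf{X}_N^2] = \beta$; congruent with $(a, b) \in \op{FP}(\rho)$ yields $\beta$; and congruent with $(a, b) \in \op{TP}(\rho)$ yields $1$.

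Next, root $\underline{G}$ at a vertex $v_0$ and evaluate the remaining sum by tree traversal. The root contributes $N$ choices for $\phi(v_0)$. At each subsequent vertex $w$ reached from an already-labeled parent $u$ across a twin pair $[e]$, the number of admissible values for $\phi(w)$, partitioned by FP versus TP, is uniformly $(a_{\gamma(e), \gamma(e')} + o(1))N$ and $(b_{\gamma(e), \gamma(e')} + o(1))N$ respectively, by \eqref{eq:lemma_proportion_limits} applied to the fibres over $\phi(u)$ (the symmetry of $\rho$ reconciles the roles of source and target of $e$). Weighting each part by the expected contribution from the case analysis gives a factor of $(a_{\gamma(e), \gamma(e')} + b_{\gamma(e), \gamma(e')}\beta + o(1))N$ for opposing twins and $(a_{\gamma(e), \gamma(e')}\beta + b_{\gamma(e), \gamma(e')} + o(1))N$ for congruent twins. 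Taking the product across all $\#(V) - 1$ twin edges and dividing by the normalization $N^{1 + \#(E)/2} = N^{\#(V)}$ produces the claimed limit; the injectivity constraint costs at most $O(N^{\#(V) - 1})$ labelings along the way, which is negligible.

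The main obstacle is securing the uniformity in $\phi(u)$ of the per-edge count: this is precisely the content of the max $=$ min form of the homogeneity hypothesis, and without it the edge-by-edge factorization in the traversal would fail. The accompanying technical ingredient is that the symmetry of each $\sigma_N^{(i)}$ transfers to $\rho$, so that applying the hypothesis to the fibres $\op{FP}(\rho, j)$ and $\op{TP}(\rho, j)$ delivers the same count regardless of whether the shared vertex $u$ sits at the source or the target of $e$ in the traversal.
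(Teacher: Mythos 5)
Your proposal follows the paper's approach: the same twin-edge case analysis (opposing/congruent crossed with FP/TP giving $1$ or $\beta$), the same reliance on the max $=$ min form of the homogeneity hypothesis to make the per-edge count uniform in the already-labeled vertex, and a tree-structured iterated sum. The only cosmetic difference is the traversal order — the paper iteratively strips leaves from the outside in (which makes the ``chain of conditional expectations'' reading of the iterated sum in \eqref{eq:stripping_double_tree_limit} especially transparent), while you root the tree and explore outward; both are fine. Your observation that the symmetry of $\rho$ (which it inherits from the $\sigma_N^{(i)}$ since it is a composition of symmetric permutations and their inverses) lets you apply the hypothesis equally well when the shared vertex is the source or the target of $e$ is correct and is also implicit in the paper's proof.

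One step is misstated and should be tightened. You write that the expectation in \eqref{eq:injective_permuted_wigner} ``vanishes unless, for each twin pair, the two permuted positions agree as unordered pairs.'' That is not literally true: the expectation can also be nonzero when two permuted positions coming from \emph{different} twin pairs coincide (producing, e.g., fourth-moment factors $\E[|\mbf{X}_N|^4]$), even if neither twin pair is internally matched. What is true — and what licenses the twin-by-twin factorization you then use — is that each such cross-matching imposes an extra constraint tying $\phi$ on one pair of vertices to $\phi$ on another, so the number of labelings producing any cross-matching is $O_T(N^{\#(V)-1})$, and by the moment bound \eqref{eq:moment_bound} their total contribution is $o_T(1)$ after dividing by $N^{\#(V)}$. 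The paper makes this negligibility argument explicit, invoking the degrees-of-freedom bookkeeping from Lemma~\ref{lem:double_tree} (``matching edges reduces the number of degrees of freedom, and we only have just enough to survive the normalization''). You should replace the vanishing claim with this estimate; as stated, the factorization across twin pairs is asserted rather than established.
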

\begin{proof}
Assume that $T = (V, E, \gamma) \in \mcal{T}\langle I \rangle$ is a double tree. We rewrite the injective traffic state accordingly:
\[
  \nu_{\mcal{M}_N}^0[T] = \frac{1}{N^{\#(\wtilde{\mcal{N}}) + 1}}\sum_{\phi: V \hookrightarrow [N]} \E\Big[\prod_{[e] = \{e, e'\} \in \wtilde{\mcal{N}}} \big[\mbf{X}_N(\sigma_N^{(\gamma(e))}(\phi(e)))\mbf{X}_N(\sigma_N^{(\gamma(e'))}(\phi(e')))\big]\Big].
\]
We would like to use the independence of the $(\mbf{X}_N(j, k))_{1 \leq j < k \leq N}$ and the injectivity of $\phi$ to factor the expectation; however, the map $\phi$ could possibly match different edges as in \eqref{eq:expanded_singleton_meetup}. Fortunately, our earlier work shows that this is typically not the case. In particular, matching edges reduces the number of degrees of freedom, and we only have just enough to survive the normalization as it is since $\#(V) = \#(\mcal{N}) + 1$. Using our strong moment assumption \eqref{eq:moment_bound}, we see that the contribution from any given summand is bounded, which allows us to pass to the desired factorization at negligible cost:
\[
  \nu_{\mcal{M}_N}^0[T] = \frac{1}{N^{\#(V)}}\sum_{\phi: V \hookrightarrow [N]} \prod_{[e] = \{e, e'\} \in \wtilde{\mcal{N}}} \E\Big[\mbf{X}_N(\sigma_N^{(\gamma(e))}(\phi(e)))\mbf{X}_N(\sigma_N^{(\gamma(e'))}(\phi(e')))\Big] + o_T(1).
\]
Similarly, since most maps from $V$ to $[N]$ are injective ($\lim_{N \to \infty} \frac{N^{\underline{\#(V)}}}{N^{\#(V)}} = 1$), we can further write
\[
  \nu_{\mcal{M}_N}^0[T] = \frac{1}{N^{\#(V)}}\sum_{\phi(v_{\#(V)}) \in [N]} \cdots \sum_{\phi(v_1) \in [N]} \prod_{[e] = \{e, e'\} \in \wtilde{\mcal{N}}} \E\Big[\mbf{X}_N(\sigma_N^{(\gamma(e))}(\phi(e)))\mbf{X}_N(\sigma_N^{(\gamma(e'))}(\phi(e')))\Big] + o_T(1)
\]
for any enumeration $v_1, \ldots, v_{\#(V)}$ of the vertices.

We calculate the possible contributions from a pair of twin edges:\small
\[
  \E\Big[\mbf{X}_N(\sigma_N^{(\gamma(e))}(\phi(e)))\mbf{X}_N(\sigma_N^{(\gamma(e'))}(\phi(e')))\Big] =
  \begin{dcases}
    1 & \text{if } \{e, e'\} \text{ are opposing and } \phi(e) \in \op{FP}((\sigma_N^{(\gamma(e'))})^{-1} \circ \sigma_N^{(\gamma(e))}) \\
    & \text{or if }  \{e, e'\} \text{ are congruent and }  \phi(e) \in \op{TP}((\sigma_N^{(\gamma(e'))})^{-1} \circ \sigma_N^{(\gamma(e))}); \\
    \beta & \text{if } \{e, e'\} \text{ are opposing and } \phi(e) \in \op{TP}((\sigma_N^{(\gamma(e'))})^{-1} \circ \sigma_N^{(\gamma(e))}) \\
    & \text{or if }  \{e, e'\} \text{ are congruent and }  \phi(e) \in \op{FP}((\sigma_N^{(\gamma(e'))})^{-1} \circ \sigma_N^{(\gamma(e))}); \\  
    0 &\text{else.}
  \end{dcases}
\]\normalsize
Of course, we need to know the value of $\phi(e) = (\phi(\tar(e)), \phi(\src(e))$ to make this determination, but the limits in \eqref{eq:lemma_proportion_limits} imply that the average contribution over all possible values of $\phi(\src(e))$ is asymptotically independent of the choice of $\phi(\tar(e))$ (and vice versa). We can then hope to evaluate the large $N$ limit of our injective traffic distribution by interpreting the normalized iterated sum as a successive chain of conditional expectations. 

To achieve this, we consider the following enumeration of the vertices. First, we enumerate the leaves of our double tree $v_1, \ldots, v_{n_1}$. We then remove these vertices and all incident edges, resulting in a new double tree with strictly fewer vertices. We repeat the procedure until all vertices have been accounted for $v_1, \ldots, v_{n_1}, v_{n_1 + 1}, \ldots, v_{n_2}, \ldots, v_{\#(V)}$. Using this iterative stripping process and the limits in \eqref{eq:lemma_proportion_limits}, we see that\footnotesize
\begin{equation}\label{eq:stripping_double_tree_limit}
\begin{aligned}
  &\lim_{N \to \infty} \frac{1}{N^{\#(V)}}\sum_{\phi(v_{\#(V)}) \in [N]} \sum_{\phi(v_{\#(V)-1}) \in [N]} \cdots \sum_{\phi(v_1) \in [N]} \prod_{[e] = \{e, e'\} \in \wtilde{\mcal{N}}} \E\Big[\mbf{X}_N(\sigma_N^{(\gamma(e))}(\phi(e)))\mbf{X}_N(\sigma_N^{(\gamma(e'))}(\phi(e')))\Big] \\
  = &\lim_{N \to \infty} \frac{1}{N}\sum_{\phi(v_{\#(V)}) \in [N]} \frac{1}{N} \sum_{\phi(v_{\#(V)-1}) \in [N]} \cdots \frac{1}{N}\sum_{\phi(v_1) \in [N]} \prod_{[e] = \{e, e'\} \in \wtilde{\mcal{N}}} \E\Big[\mbf{X}_N(\sigma_N^{(\gamma(e))}(\phi(e)))\mbf{X}_N(\sigma_N^{(\gamma(e'))}(\phi(e')))\Big] \\
  = &\prod_{[e] = \{e, e'\} \in \wtilde{\mcal{N}}} \Big(\indc{\{e, e'\} \text{ are opposing}}(a_{\gamma(e), \gamma(e')}+ b_{\gamma(e), \gamma(e')}\beta) \\
   &\phantom{\prod_{[e] = \{e, e'\} \in \wtilde{\mcal{N}}} \Big(}\hspace*{-10pt}+\indc{\{e, e'\} \text{ are congruent}}(a_{\gamma(e), \gamma(e')}\beta + b_{\gamma(e), \gamma(e')})\Big),
\end{aligned}
\end{equation}\normalsize
as was to be shown.
\end{proof}

\begin{rem}\label{rem:max_min_limits_traffic}
If $\beta \in (-1, 1)$, the limits in \eqref{eq:lemma_proportion_limits} are equivalent to
\begin{align*}
  &\lim_{N \to \infty} \max_{j \in [N]} \frac{\#\Big(\text{FP}\big((\sigma_N^{(i')})^{-1} \circ \sigma_N^{(i)}, j\big)\Big) + \#\Big(\text{TP}\big((\sigma_N^{(i')})^{-1} \circ \sigma_N^{(i)}, j\big)\Big)\beta}{N} \\
  = &\lim_{N \to \infty} \min_{j \in [N]} \frac{\#\Big(\text{FP}\big((\sigma_N^{(i')})^{-1} \circ \sigma_N^{(i)}, j\big)\Big) + \#\Big(\text{TP}\big((\sigma_N^{(i')})^{-1} \circ \sigma_N^{(i)}, j\big)\Big)\beta}{N} = a_{i, i'} + b_{i, i'}\beta
\end{align*}
and
\begin{align*}
  &\lim_{N \to \infty} \max_{j \in [N]} \frac{\#\Big(\text{FP}\big((\sigma_N^{(i')})^{-1} \circ \sigma_N^{(i)}, j\big)\Big)\beta + \#\Big(\text{TP}\big((\sigma_N^{(i')})^{-1} \circ \sigma_N^{(i)}, j\big)\Big)}{N} \\
  = &\lim_{N \to \infty} \min_{j \in [N]} \frac{\#\Big(\text{FP}\big((\sigma_N^{(i')})^{-1} \circ \sigma_N^{(i)}, j\big)\Big)\beta + \#\Big(\text{TP}\big((\sigma_N^{(i')})^{-1} \circ \sigma_N^{(i)}, j\big)\Big)}{N} = a_{i, i'}\beta + b_{i, i'}.
\end{align*}
In the case of $\beta \in \{-1, 1\}$, either of the two sets of limits above already implies the other, which is all we need to justify the convergence in \eqref{eq:stripping_double_tree_limit}. This explains the weaker condition replacing conditions \ref{cond:fp} and \ref{cond:tp} in Theorem \ref{thm:permuted_traffic}.
\end{rem}

This proves the first half of Theorem \ref{thm:permuted_traffic}. For the second half, we start with the converse of Lemma \ref{lem:double_tree}.
\begin{lemma}\label{lem:double_tree_converse}
Suppose that $\nu_{\mcal{M}_N}^0[T] = o_T(1)$ if $T \in \mcal{T}\langle I \rangle$ is not a double tree. In that case, if $\beta \in (-1, 1]$, then $(\sigma_N^{(i')})^{-1} \circ \sigma_N^{(i)}$ stays off the grid for every $i, i' \in I$.
\end{lemma}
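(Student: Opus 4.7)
The plan is to probe $\op{Grid}(\tau)$, where $\tau := (\sigma_N^{(i')})^{-1} \circ \sigma_N^{(i)}$, by evaluating $\nu^0_{\mcal{M}_N}$ on two well-chosen non-double-tree test graphs and then inverting a $2 \times 2$ linear system in $\beta$. Fix $i, i' \in I$; the case $i = i'$ is trivial since then $\tau = \id$ has empty grid, so assume $i \neq i'$. Both $\sigma_N^{(i)}$ and $\sigma_N^{(i')}$ commute with $\intercal$, so $\tau$ does as well and preserves the diagonal. Hence for each off-diagonal $(p, q)$ with $\tau(p, q) \in \op{Grid}(p, q)$, the image has exactly one coordinate in $\{p, q\}$, and I stratify the grid accordingly:
\begin{align*}
 \#_A &= \#\{(p, q) : p \neq q,\ \tau(p, q) = (p, m),\ m \notin \{p, q\}\}, \\
 \#_B &= \#\{(p, q) : p \neq q,\ \tau(p, q) = (q, m),\ m \notin \{p, q\}\},
\end{align*}
with $\#_C, \#_D$ defined analogously by matching the \emph{second} coordinate of $\tau(p, q)$ to $p$ or $q$. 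These four cases are disjoint and exhaust $\op{Grid}(\tau)$, and the relation $\tau(q, p) = \tau(p, q)^\intercal$ yields $\#_A = \#_D$ and $\#_B = \#_C$, so $\#(\op{Grid}(\tau)) = 2(\#_A + \#_B)$.

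Consider two non-double-tree test graphs on the vertex set $\{1, 2, 3\}$: the path $T_{\text{path}}$ with edges $1 \to 2$ (label $i$) and $2 \to 3$ (label $i'$), and the V-shape $T_{\text{V}}$ with edges $2 \to 1$ (label $i$) and $2 \to 3$ (label $i'$). Expanding $\nu^0_{\mcal{M}_N}[T]$ via \eqref{eq:injective_permuted_wigner} and using $\E[\mbf{X}_N(r) \mbf{X}_N(s)] = \indc{r = s^\intercal} + \beta \indc{r = s}$ for distinct off-diagonal entries $r, s$, a direct case analysis gives the following. Injectivity of $\phi$ rules out fixed points and transpose-fixed-points of $\tau$ as well as images $\tau(p, q)$ with no coordinate in $\{p, q\}$; on $T_{\text{path}}$, writing $(p, q) = (\phi(2), \phi(1))$, only grid cases $A$ (contributing $1$) and $C$ (contributing $\beta$) yield a valid injection $\phi$, each in exactly one way; on $T_{\text{V}}$, writing $(p, q) = (\phi(1), \phi(2))$, only cases $B$ (contributing $1$) and $D$ (contributing $\beta$) survive. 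With $\#V = 3$ and $\#E = 2$ the normalization $N^{-(1 + \#E/2)} = N^{-2}$ then gives the \emph{exact} identities
\[
 \nu^0_{\mcal{M}_N}[T_{\text{path}}] = \frac{\#_A + \beta \#_B}{N^2}, \qquad \nu^0_{\mcal{M}_N}[T_{\text{V}}] = \frac{\beta \#_A + \#_B}{N^2}.
\]

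Invoking $\nu^0_{\mcal{M}_N}[T] = o_T(1)$ on each test graph yields the linear system
\[
 \begin{pmatrix} 1 & \beta \\ \beta & 1 \end{pmatrix} \begin{pmatrix} \#_A \\ \#_B \end{pmatrix} = \begin{pmatrix} o(N^2) \\ o(N^2) \end{pmatrix}.
\]
The coefficient matrix has determinant $1 - \beta^2$, strictly positive for $\beta \in (-1, 1)$; inverting gives $\#_A, \#_B = o(N^2)$ separately, and hence $\#(\op{Grid}(\tau)) = o(N^2)$. For $\beta = 1$ the two equations coincide as $\#_A + \#_B = o(N^2)$, which is still sufficient. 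At $\beta = -1$ the matrix drops to rank one with kernel spanned by $(1, 1)^\top$, so only $\#_A - \#_B$ is controlled---consistent with the exclusion of $\beta = -1$ from the hypothesis and with the cancellation phenomenon noted in Remark \ref{rem:exceptional_beta_case_traffic}. The main point requiring care is the elementary but slightly fiddly case analysis on $T_{\text{path}}$ and $T_{\text{V}}$ ruling out fixed points, transpose fixed points, non-grid images, and the two ``wrong-type'' grid cases for each graph; once the exact counts are established, the rest is linear algebra.
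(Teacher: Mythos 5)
Your proof is correct and follows essentially the same strategy as the paper: you evaluate $\nu^0_{\mcal{M}_N}$ on the same two three-vertex, non-double-tree test graphs (the paper's ``path'' has its arrows reversed relative to yours, but that is immaterial), obtain the same $2\times 2$ linear system with coefficient matrix $\begin{pmatrix} 1 & \beta \\ \beta & 1 \end{pmatrix}$, invert it for $\beta \in (-1,1]$, and use the symmetry $\tau(q,p) = \tau(p,q)^\intercal$ to recover the full grid count; your explicit stratification into cases $A, B, C, D$ with $\#_A = \#_D$, $\#_B = \#_C$ is just a spelled-out version of the paper's sets $\Gamma_N, \chi_N$ and its closing remark about symmetry.
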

\begin{proof}
By assumption,
\[
  \begin{tikzpicture}
    %% variance
    \node at (-.775, 0) {$\displaystyle\nu_{\mcal{M}_N}^0\Bigg[$};
    \node at (8.5, 0) {$\displaystyle\Bigg] = \frac{1}{N^2}\sum_{\phi: \{v_1, v_2, v_3\} \hookrightarrow [N]} \E\big[\mbf{X}_N(\sigma_N^{(i)}(\phi(v_1), \phi(v_2)))\mbf{X}_N(\sigma_N^{(i')}(\phi(v_2), \phi(v_3)))\big] = o_T(1)$,};
    %% vertices
    \draw[fill=black] (0,0) circle (1.75pt);
    \node at (0, -.375) {\footnotesize$v_1$\normalsize};
    \draw[fill=black] (1,0) circle (1.75pt);
    \node at (1, -.375) {\footnotesize$v_2$\normalsize};
    \draw[fill=black] (2,0) circle (1.75pt);
    \node at (2, -.375) {\footnotesize$v_3$\normalsize};
    %% edges
    \draw[semithick, ->] (.875, 0) to node[midway, above] {\footnotesize$i$\normalsize} (.125, 0);
    \draw[semithick, ->] (1.875, 0) to node[midway, above] {\footnotesize$i'$\normalsize} (1.125, 0);
  \end{tikzpicture}  
\]
where
\begin{align*}
  \E\big[\mbf{X}_N(\sigma_N^{(i)}(\phi(v_1), \phi(v_2)))\mbf{X}_N(\sigma_N^{(i')}(\phi(v_2), \phi(v_3)))\big] &= \E\big[\mbf{X}_N((\sigma_N^{(i')})^{-1} \circ \sigma_N^{(i)}(\phi(v_1), \phi(v_2)))\mbf{X}_N(\phi(v_2), \phi(v_3))\big] \\
  &= \begin{dcases}
    1 &\text{if } (\sigma_N^{(i')})^{-1} \circ \sigma_N^{(i)}(\phi(v_1), \phi(v_2)) = (\phi(v_3), \phi(v_2)); \\
    \beta &\text{if } (\sigma_N^{(i')})^{-1} \circ \sigma_N^{(i)}(\phi(v_1), \phi(v_2)) = (\phi(v_2), \phi(v_3)); \\
    0 &\text{else.}
    \end{dcases}
\end{align*}
Similarly,
\[
  \begin{tikzpicture}
    %% variance
    \node at (-.775, 0) {$\displaystyle\nu_{\mcal{M}_N}^0\Bigg[$};
    \node at (8.5, 0) {$\displaystyle\Bigg] = \frac{1}{N^2}\sum_{\phi: \{v_1, v_2, v_3\} \hookrightarrow [N]} \E\big[\mbf{X}_N(\sigma_N^{(i)}(\phi(v_1), \phi(v_2)))\mbf{X}_N(\sigma_N^{(i')}(\phi(v_3), \phi(v_2)))\big] = o_T(1)$,};
    %% vertices
    \draw[fill=black] (0,0) circle (1.75pt);
    \node at (0, -.375) {\footnotesize$v_1$\normalsize};
    \draw[fill=black] (1,0) circle (1.75pt);
    \node at (1, -.375) {\footnotesize$v_2$\normalsize};
    \draw[fill=black] (2,0) circle (1.75pt);
    \node at (2, -.375) {\footnotesize$v_3$\normalsize};
    %% edges
    \draw[semithick, ->] (.875, 0) to node[midway, above] {\footnotesize$i$\normalsize} (.125, 0);
    \draw[semithick, ->] (1.125, 0) to node[midway, above] {\footnotesize$i'$\normalsize} (1.875, 0);
  \end{tikzpicture}  
\]
where
\begin{align*}
  \E\big[\mbf{X}_N(\sigma_N^{(i)}(\phi(v_1), \phi(v_2)))\mbf{X}_N(\sigma_N^{(i')}(\phi(v_3), \phi(v_2)))\big] &= \E\big[\mbf{X}_N((\sigma_N^{(i')})^{-1} \circ \sigma_N^{(i)}(\phi(v_1), \phi(v_2)))\mbf{X}_N(\phi(v_3), \phi(v_2))\big] \\
  &= \begin{dcases}
    \beta &\text{if } (\sigma_N^{(i')})^{-1} \circ \sigma_N^{(i)}(\phi(v_1), \phi(v_2)) = (\phi(v_3), \phi(v_2)); \\
    1 &\text{if } (\sigma_N^{(i')})^{-1} \circ \sigma_N^{(i)}(\phi(v_1), \phi(v_2)) = (\phi(v_2), \phi(v_3)); \\
    0 &\text{else.}
    \end{dcases}
\end{align*}
In other words, the sets
\begin{align*}
  \Gamma_N\big((\sigma_N^{(i')})^{-1} \circ \sigma_N^{(i)}\big) &:= \{(j, k, l) \in [N]^{\underline{3}}:  (\sigma_N^{(i')})^{-1} \circ \sigma_N^{(i)}(j, k) = (l, k)\}; \\
  \chi_N\big((\sigma_N^{(i')})^{-1} \circ \sigma_N^{(i)}\big) &:= \{(j, k, l) \in [N]^{\underline{3}}:  (\sigma_N^{(i')})^{-1} \circ \sigma_N^{(i)}(j, k) = (k, l)\};
\end{align*}
satisfy the asymptotics
\begin{equation}\label{eq:asymptotics_grid}
\begin{aligned}
  \#\Big(\Gamma_N\big((\sigma_N^{(i')})^{-1} \circ \sigma_N^{(i)}\big)\Big) + \beta\#\Big(\chi_N\big((\sigma_N^{(i')})^{-1} \circ \sigma_N^{(i)}\big)\Big) = o(N^2); \\
  \beta\#\Big(\Gamma_N\big((\sigma_N^{(i')})^{-1} \circ \sigma_N^{(i)}\big)\Big) + \#\Big(\chi_N\big((\sigma_N^{(i')})^{-1} \circ \sigma_N^{(i)}\big)\Big) = o(N^2).
\end{aligned}
\end{equation}
Since $\beta \in (-1, 1]$, this implies
\[
  \#\Big(\chi_N\big((\sigma_N^{(i')})^{-1} \circ \sigma_N^{(i)}\big)\Big) + \#\Big(\Gamma_N\big((\sigma_N^{(i')})^{-1} \circ \sigma_N^{(i)}\big)\Big) = o(N^2).
\]
The symmetry of the permutations then implies that
\[
  \#\Big(\text{Grid}\big((\sigma_N^{(i')})^{-1} \circ \sigma_N^{(i)}\big)\Big) = o(N^2),
\]
as was to be shown.
\end{proof}
\begin{rem}\label{rem:perfect_cancellation_exceptional_beta}
In the case of $\beta = -1$, the asymptotics in \eqref{eq:asymptotics_grid} only allow us to conclude that\small
\[
  \lim_{N \to \infty} \frac{\#\big(\{(j, k, l) \in [N]^3: ((\sigma_N^{(i')})^{-1} \circ \sigma_N^{(i)})(j, k) = (l, k)\}\big) - \#\big(\{(j, k, l) \in [N]^3: ((\sigma_N^{(i')})^{-1} \circ \sigma_N^{(i)})(j, k) = (k, l)\}\big)}{N^2} = 0,
\]\normalsize
which is the weaker condition replacing condition \ref{Cond:grid} in Theorem \ref{thm:permuted_traffic}. Applying the Cauchy-Schwarz inequality to the asymptotic from the the non-double tree
\[
  \begin{tikzpicture}
    %% variance
    \node at (-.5, 0) {$\displaystyle\nu_{\mcal{M}_N}^0\Bigg[$};
    \node at (2.825, 0) {$\displaystyle\Bigg] = o_T(1)$,};
    %% vertices
    \draw[fill=black] (.25,0) circle (1.75pt);
    \draw[fill=black] (1,0) circle (1.75pt);
    \draw[fill=black] (1,.75) circle (1.75pt);
    \draw[fill=black] (1,-.75) circle (1.75pt);
    \draw[fill=black] (1.75,0) circle (1.75pt);
    %% edges
    \draw[semithick, ->] (.875, 0) to node[midway, below] {\footnotesize$i$\normalsize} (.375, 0);
    \draw[semithick, ->] (1.625, 0) to node[midway, above] {\footnotesize$i'$\normalsize} (1.125, 0);
    \draw[semithick, ->] (1, -.625) to node[midway, right] {\footnotesize$i'$\normalsize} (1, -.125);
    \draw[semithick, ->] (1, .125) to node[midway, left] {\footnotesize$i$\normalsize} (1, .625);
  \end{tikzpicture}  
\]
one can upgrade this to\footnotesize
\[
  \sum_{j \in [N]} \left|\#\big(\{(k, l) \in [N]^2: ((\sigma_N^{(i')})^{-1} \circ \sigma_N^{(i)})(j, k) = (l, k)\}\big) - \#\big(\{(k, l) \in [N]^3: ((\sigma_N^{(i')})^{-1} \circ \sigma_N^{(i)})(j, k) = (k, l)\}\big)\right| = o_{i, i'}(N^2).
\]\normalsize
This seems like the most natural candidate to replace the grid condition in the case of $\beta = -1$.
\end{rem}

It remains to prove the necessity of conditions \ref{Cond:fp} and \ref{Cond:tp} in Theorem \ref{thm:permuted_traffic}, which can be seen from the simplest nontrivial double tree.

\begin{lemma}\label{lem:necessary_conditions_traffic}
Suppose that $\mcal{M}_N$ converges in traffic distribution to a semicircular traffic family of covariance $\mbf{K}$ and pseudocovariance $\mbf{J}$. In that case, if $\beta \in (-1, 1)$, then
\begin{equation}\label{eq:fp_tp_limits_covariances}
\begin{aligned}
  \lim_{N \to \infty} \frac{\#\Big(\text{\emph{FP}}\big((\sigma_N^{(i')})^{-1} \circ \sigma_N^{(i)}\big)\Big)}{N^2} &= \frac{\mbf{K}(i, i') - \beta \mbf{J}(i, i')}{1 - \beta^2}; \\
  \lim_{N \to \infty} \frac{\#\Big(\text{\emph{TP}}\big((\sigma_N^{(i')})^{-1} \circ \sigma_N^{(i)}\big)\Big)}{N^2} &= \frac{-\beta\mbf{K}(i, i') + \mbf{J}(i, i')}{1 - \beta^2}.
\end{aligned}
\end{equation}
\end{lemma}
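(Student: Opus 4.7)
The plan is to evaluate the injective traffic distribution of $\mcal{M}_N$ on the two simplest nontrivial double trees---the one with opposing twin edges and the one with congruent twin edges---and recover the individual limits of $\#(\op{FP}(\pi_N))/N^2$ and $\#(\op{TP}(\pi_N))/N^2$ by inverting the resulting linear system, where $\pi_N := (\sigma_N^{(i')})^{-1} \circ \sigma_N^{(i)}$.

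Fix $i, i' \in I$. Let $T^{\text{opp}}$ be the test graph on vertices $\{v_1, v_2\}$ with an edge $e$ labeled $i$ from $v_1$ to $v_2$ and an edge $e'$ labeled $i'$ from $v_2$ to $v_1$; let $T^{\text{con}}$ be the analogue with both edges going from $v_1$ to $v_2$. Both are double trees, so the convergence assumption and \eqref{eq:semicircular_traffic_family} give $\lim_N \nu_{\mcal{M}_N}^0[T^{\text{opp}}] = \mbf{K}(i, i')$ and $\lim_N \nu_{\mcal{M}_N}^0[T^{\text{con}}] = \mbf{J}(i, i')$.

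Using the symmetry relation $\sigma_N^{(i')} \circ \intercal = \intercal \circ \sigma_N^{(i')}$, one checks that for $T^{\text{opp}}$ the pair $\sigma_N^{(i)}(\phi(e))$ and $\sigma_N^{(i')}(\phi(e')) = \sigma_N^{(i')}(\phi(e))^\intercal$ agree exactly when $\phi(e) \in \op{FP}(\pi_N)$ (giving an expectation of $1$) and are transposes of each other exactly when $\phi(e) \in \op{TP}(\pi_N)$ (giving an expectation of $\beta$); the roles of $\op{FP}$ and $\op{TP}$ swap for $T^{\text{con}}$ since then $\phi(e) = \phi(e')$. Summing over injective $\phi$ and noting that the diagonal contribution to either of $\#(\op{FP}(\pi_N))$ or $\#(\op{TP}(\pi_N))$ is $O(N)$ and hence negligible after dividing by $N^2$, we obtain
\begin{align*}
\nu_{\mcal{M}_N}^0[T^{\text{opp}}] &= \frac{\#(\op{FP}(\pi_N)) + \beta\,\#(\op{TP}(\pi_N))}{N^2} + o(1), \\
\nu_{\mcal{M}_N}^0[T^{\text{con}}] &= \frac{\beta\,\#(\op{FP}(\pi_N)) + \#(\op{TP}(\pi_N))}{N^2} + o(1).
\end{align*}

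Writing $a_N = \#(\op{FP}(\pi_N))/N^2$ and $b_N = \#(\op{TP}(\pi_N))/N^2$, the above produces the asymptotic linear system $a_N + \beta b_N \to \mbf{K}(i, i')$ and $\beta a_N + b_N \to \mbf{J}(i, i')$. Since $\beta \in (-1, 1)$, the coefficient matrix $\bigl(\begin{smallmatrix} 1 & \beta \\ \beta & 1 \end{smallmatrix}\bigr)$ has determinant $1 - \beta^2 \neq 0$, so it can be inverted to yield the individual limits $a_N \to (\mbf{K}(i, i') - \beta \mbf{J}(i, i'))/(1 - \beta^2)$ and $b_N \to (-\beta\mbf{K}(i, i') + \mbf{J}(i, i'))/(1 - \beta^2)$, which is exactly \eqref{eq:fp_tp_limits_covariances}. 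There is no genuine obstacle: the restriction $\beta \in (-1, 1)$ is precisely what makes this $2 \times 2$ system nondegenerate, which clarifies why the endpoints $\beta \in \{-1, 1\}$ must instead be captured by the weaker combined condition \ref{Cond:fptp}.
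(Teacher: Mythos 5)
Your proposal is correct and follows essentially the same route as the paper's proof: evaluate the injective traffic state on the two two-vertex double trees (opposing and congruent twin edges), observe that each yields a limit combining $\#(\op{FP})/N^2$ and $\#(\op{TP})/N^2$ with coefficients $(1,\beta)$ and $(\beta,1)$ respectively, and invert the resulting $2\times 2$ linear system, which is nondegenerate precisely when $\beta \in (-1,1)$. The only cosmetic difference is that you explicitly remark that the diagonal contribution to $\#(\op{FP})$ and $\#(\op{TP})$ is $O(N)$ and hence negligible, which the paper leaves implicit in its choice to sum over injective $\phi$.
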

\begin{proof}
By assumption,
\[
  \begin{tikzpicture}
    %% variance
    \node at (-1.25, 0) {$\displaystyle \lim_{N \to \infty} \nu_{\mcal{M}_N}^0\Bigg[$};
    \node at (8, 0) {$\displaystyle \Bigg] = \lim_{N \to \infty} \frac{1}{N^2} \sum_{\phi: \{v_1, v_2\} \hookrightarrow [N]}  \E\big[\mbf{X}_N(\sigma_N^{(i)}(\phi(v_2), \phi(v_1)))\mbf{X}_N(\sigma_N^{(i')}(\phi(v_1), \phi(v_2)))\big] = \mbf{K}(i, i')$.};
    %% vertices
    \draw[fill=black] (0,0) circle (1.75pt);
    \node at (0, -.375) {\footnotesize$v_1$\normalsize};
    \draw[fill=black] (1,0) circle (1.75pt);
    \node at (1, -.375) {\footnotesize$v_2$\normalsize};
    %% edges
    \draw[semithick, ->] (.125, .1) to node[midway, above] {\footnotesize$i$\normalsize} (.875, .1);
    \draw[semithick, ->] (.875, -.1) to node[midway, below] {\footnotesize$i'$\normalsize} (.125, -.1);
  \end{tikzpicture}  
\]
As before,
\begin{align*}
  &\sum_{\phi: \{v_1, v_2\} \hookrightarrow [N]} \E\big[\mbf{X}_N(\sigma_N^{(i)}(\phi(v_2), \phi(v_1)))\mbf{X}_N(\sigma_N^{(i')}(\phi(v_1), \phi(v_2)))\big] \\
  = &\sum_{\phi: \{v_1, v_2\} \hookrightarrow [N]}  \E\big[\mbf{X}_N((\sigma_N^{(i')})^{-1} \circ\sigma_N^{(i)}(\phi(v_2), \phi(v_1)))\mbf{X}_N(\phi(v_1), \phi(v_2))\big] \\
  = &\#\Big(\text{FP}\big((\sigma_N^{(i')})^{-1} \circ\sigma_N^{(i)}\big)\Big) + \beta\#\Big(\text{TP}\big((\sigma_N^{(i')})^{-1} \circ\sigma_N^{(i)}\big)\Big) + o(N^2).
\end{align*}
Thus,
\begin{equation}\label{eq:covariance_fp_tp}
  \lim_{N \to \infty} \frac{\#\Big(\text{FP}\big((\sigma_N^{(i')})^{-1} \circ\sigma_N^{(i)}\big)\Big) + \beta\#\Big(\text{TP}\big((\sigma_N^{(i')})^{-1} \circ\sigma_N^{(i)}\big)\Big)}{N^2} = \mbf{K}(i, i').
\end{equation}
A similar expansion of the limit
\[
  \begin{tikzpicture}
    %% variance
    \node at (-1.25, 0) {$\displaystyle \lim_{N \to \infty} \nu_{\mcal{M}_N}^0\Bigg[$};
    \node at (2.125, 0) {$\displaystyle \Bigg] = \mbf{J}(i, i')$};
    %% vertices
    \draw[fill=black] (0,0) circle (1.75pt);
    \node at (0, -.375) {\footnotesize$v_1$\normalsize};
    \draw[fill=black] (1,0) circle (1.75pt);
    \node at (1, -.375) {\footnotesize$v_2$\normalsize};
    %% edges
    \draw[semithick, ->] (.875, .1) to node[midway, above] {\footnotesize$i$\normalsize} (.125, .1);
    \draw[semithick, ->] (.875, -.1) to node[midway, below] {\footnotesize$i'$\normalsize} (.125, -.1);
  \end{tikzpicture}  
\]
shows that
\begin{equation}\label{eq:pseudocovariance_fp_tp}
  \lim_{N \to \infty} \frac{\beta\#\Big(\text{FP}\big((\sigma_N^{(i')})^{-1} \circ\sigma_N^{(i)}\big)\Big) + \#\Big(\text{TP}\big((\sigma_N^{(i')})^{-1} \circ\sigma_N^{(i)}\big)\Big)}{N^2} = \mbf{J}(i, i').
\end{equation}
Since $\beta \in (-1, 1)$, the limits \eqref{eq:covariance_fp_tp} and \eqref{eq:pseudocovariance_fp_tp} are equivalent to the desired limits \eqref{eq:fp_tp_limits_covariances}.
\end{proof}

\begin{rem}\label{rem:fp_tp_limits_beta}
If $\beta \in \{-1, 1\}$, then either of the limits \eqref{eq:covariance_fp_tp} or \eqref{eq:pseudocovariance_fp_tp} is enough to determine the other, which explains the weaker condition \ref{Cond:fptp} replacing conditions \ref{Cond:fp} and \ref{Cond:tp} in Theorem \ref{thm:permuted_traffic}.
\end{rem}

Finally, if $\beta \in (-1, 1)$ (resp., $\beta = 1$), we prove that conditions \ref{Cond:fp} and \ref{Cond:tp} (resp. condition \ref{Cond:fptp}) with $a_{i, i'} = b_{i, i'} = 0$ for $i \neq i'$ and condition \ref{Cond:grid} are necessary and sufficient for the asymptotic traffic independence of the $(\mbf{M}_N^{(i)})_{i \in I}$. The necessity already follows from our work above and the definition of a semicircular traffic family (recall Example \ref{eg:semicircular_traffic_family}). To prove sufficiency, we need only to show that conditions \ref{Cond:fp} and \ref{Cond:tp} (resp. condition \ref{Cond:fptp}) with $a_{i, i'} = b_{i, i'} = 0$ are enough to repeat the calculation in Lemma \ref{lem:homogeneity_traffic}. In other words, we do not need the finer control within each row given by \eqref{eq:lemma_proportion_limits} if we know the overall proportion is vanishing.

\begin{lemma}\label{lem:vanishing_proportion_pure_double_trees}
Suppose that
\begin{equation}\label{eq:lemma_vanishing_proportion_limits}
  \lim_{N \to \infty} \frac{\#\Big(\text{\emph{FP}}\big((\sigma_N^{(i')})^{-1} \circ \sigma_N^{(i)}\big)\Big) + \#\Big(\text{\emph{TP}}\big((\sigma_N^{(i')})^{-1} \circ \sigma_N^{(i)}\big)\Big)}{N^2} = 0
\end{equation}
for any $i \neq i'$. In that case, if $T \in \mcal{T}\langle I \rangle$ is a double tree, then
\[
\begin{aligned}
  \lim_{N \to \infty} \nu_{\mcal{M}_N}^0[T] = \prod_{[e] = \{e, e'\} \in \wtilde{\mcal{N}}} \Big(&\indc{\{e, e'\} \text{ \emph{are opposing and }} \gamma(e) = \gamma(e')} \\
  + \beta&\indc{\{e, e'\} \text{ \emph{are congruent and }} \gamma(e) = \gamma(e')})\Big).
\end{aligned}
\]
\end{lemma}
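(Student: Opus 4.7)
The plan is to mirror the proof of Lemma~\ref{lem:homogeneity_traffic}, but exploit the weaker hypothesis \eqref{eq:lemma_vanishing_proportion_limits} on polychromatic twin pairs in place of the per-row uniform control \eqref{eq:lemma_proportion_limits}. The starting point is the factorization already derived in Lemma~\ref{lem:homogeneity_traffic}, which only used injectivity of $\phi$, the moment bound \eqref{eq:moment_bound}, and the double-tree identity $\#(V) = \#(\wtilde{\mcal{N}}) + 1$:
\[
  \nu_{\mcal{M}_N}^0[T] = \frac{1}{N^{\#(V)}}\sum_{\phi: V \hookrightarrow [N]} \prod_{[e] = \{e, e'\} \in \wtilde{\mcal{N}}} \E\big[\mbf{X}_N(\sigma_N^{(\gamma(e))}(\phi(e)))\mbf{X}_N(\sigma_N^{(\gamma(e'))}(\phi(e')))\big] + o_T(1).
\]

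I would then split into two cases according to whether every twin pair is \emph{monochromatic} (meaning $\gamma(e)=\gamma(e')$). In the fully monochromatic case, each factor is directly computable: for an opposing twin pair, symmetry of $\sigma_N^{(\gamma(e))}$ forces $\sigma_N^{(\gamma(e))}(\phi(e))$ and $\sigma_N^{(\gamma(e))}(\phi(e'))$ to be transposes of each other, so the expectation equals $1$ off the diagonal; for a congruent twin pair the two arguments coincide and the expectation equals $\beta$ off the diagonal. Diagonal configurations lose a degree of freedom and so contribute $o_T(1)$ after the $N^{-\#(V)}$ normalization, yielding the asserted product with all indicators supported on $\{\gamma(e)=\gamma(e')\}$.

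In the polychromatic case, some twin pair $[e_0] = \{e_0, e_0'\}$ satisfies $i := \gamma(e_0) \neq \gamma(e_0') =: i'$. The corresponding factor is nonzero only when $\phi(e_0) \in \op{FP}((\sigma_N^{(i')})^{-1}\circ\sigma_N^{(i)}) \cup \op{TP}((\sigma_N^{(i')})^{-1}\circ\sigma_N^{(i)})$, a set of cardinality $o(N^2)$ by \eqref{eq:lemma_vanishing_proportion_limits}. Since $\underline{G} = (V,\wtilde{\mcal{N}})$ is a tree, I would enumerate the remaining twin pairs $[e_1],\ldots,[e_{n-1}]$ in a breadth-first order rooted at $[e_0]$ so that each $[e_k]$ with $k \geq 1$ introduces exactly one previously unvisited vertex. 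Estimating each expectation factor by $m_2 = O(1)$ via \eqref{eq:moment_bound} and each new vertex by at most $N$ choices gives
\[
  \nu_{\mcal{M}_N}^0[T] \leq \frac{m_2^{\#(\wtilde{\mcal{N}})} \cdot o(N^2) \cdot N^{\#(V) - 2}}{N^{\#(V)}} + o_T(1) = o_T(1),
\]
which completes this case because any product with a mismatched label already appears with coefficient $0$ in the stated formula.

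The proof involves no substantial obstacle beyond bookkeeping; the conceptual point is that the per-row homogeneity \eqref{eq:lemma_proportion_limits} was needed in Lemma~\ref{lem:homogeneity_traffic} because each twin pair there contributed a nontrivial limit that had to be extracted via iterated conditional averages, whereas here each polychromatic pair is globally $o(1)$, so a single coarse bound that strips the polychromatic pair first suffices and no per-row control is required.
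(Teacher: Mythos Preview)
Your proposal is correct and follows essentially the same approach as the paper: both start from the factorization derived in Lemma~\ref{lem:homogeneity_traffic}, split into the monochromatic and polychromatic cases, and in the polychromatic case isolate a mixed twin pair and use \eqref{eq:lemma_vanishing_proportion_limits} together with the trivial $O_T(1)$ bound on the remaining factors to conclude $o_T(1)$. Your breadth-first enumeration is merely a slightly more explicit version of the paper's ``most maps $\phi$ yield a zero summand'' argument; one cosmetic point is that the displayed inequality should be taken in absolute value since the summands need not be nonnegative.
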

\begin{proof}
Assume that $T = (V, E, \gamma) \in \mcal{T}\langle I \rangle$ is a double tree. First, suppose that $T$ has a mixed twin edge $[e_0] = \{e_0, e_0'\}$, i.e., $\gamma(e_0) \neq \gamma(e_0')$. We need to show that $\nu_{\mcal{M}_N}^0[T] = o_T(1)$. As in Lemma \ref{lem:homogeneity_traffic}, we can pass to a factorization at a negligible cost:
\[
  \nu_{\mcal{M}_N}^0[T] = \frac{1}{N^{\#(V)}}\sum_{\phi: V \hookrightarrow [N]} \prod_{[e] = \{e, e'\} \in \wtilde{\mcal{N}}} \E\Big[\mbf{X}_N(\sigma_N^{(\gamma(e))}(\phi(e)))\mbf{X}_N(\sigma_N^{(\gamma(e'))}(\phi(e')))\Big] + o_T(1).
\]
We rewrite the product to extract the mixed twin edge:
\[
  \E\Big[\mbf{X}_N(\sigma_N^{(\gamma(e_0))}(\phi(e_0)))\mbf{X}_N(\sigma_N^{(\gamma(e_0'))}(\phi(e_0')))\Big]\prod_{[e] = \{e, e'\} \in \wtilde{\mcal{N}}\setminus\{[e_0]\}} \E\Big[\mbf{X}_N(\sigma_N^{(\gamma(e))}(\phi(e)))\mbf{X}_N(\sigma_N^{(\gamma(e'))}(\phi(e')))\Big],
\]
where
\[
  \prod_{[e] = \{e, e'\} \in \wtilde{\mcal{N}}\setminus\{[e_0]\}} \E\Big[\mbf{X}_N(\sigma_N^{(\gamma(e))}(\phi(e)))\mbf{X}_N(\sigma_N^{(\gamma(e'))}(\phi(e')))\Big] = O_T(1)
\]
thanks to our strong moment assumption \eqref{eq:moment_bound}. This implies
\[
  \nu_{\mcal{M}_N}^0[T] = \frac{1}{N^{\#(V)}}\sum_{\phi: V \hookrightarrow [N]} \E\Big[\mbf{X}_N(\sigma_N^{(\gamma(e_0))}(\phi(e_0)))\mbf{X}_N(\sigma_N^{(\gamma(e_0'))}(\phi(e_0')))\Big]O_T(1) + o_T(1);
\]
however, the limit in \eqref{eq:lemma_vanishing_proportion_limits} implies that ``most'' maps $\phi: V \hookrightarrow [N]$ yield a zero summand
\[
  \E\Big[\mbf{X}_N(\sigma_N^{(\gamma(e_0))}(\phi(e_0)))\mbf{X}_N(\sigma_N^{(\gamma(e_0'))}(\phi(e_0')))\Big] = 0.
\]
In particular,
\[
  \sum_{\phi: V \hookrightarrow [N]} \E\Big[\mbf{X}_N(\sigma_N^{(\gamma(e_0))}(\phi(e_0)))\mbf{X}_N(\sigma_N^{(\gamma(e_0'))}(\phi(e_0')))\Big]O_T(1) = o_T(N^{\#(V)}),
\]
as was to be shown.

On the other hand, if $T$ does not have any mixed twin edges, then the limit follows from the factorization and the fact that
\[
  \E\Big[\mbf{X}_N(\sigma_N^{(\gamma(e))}(\phi(e)))\mbf{X}_N(\sigma_N^{(\gamma(e'))}(\phi(e')))\Big] = \indc{\{e, e'\} \text{ are opposing}} + \beta\indc{\{e, e'\} \text{ are congruent}}
\]
if $\gamma(e) = \gamma(e')$.
\end{proof}

\begin{rem}\label{rem:mixed_double_trees_exceptional_beta}
In the case of $\beta = -1$, the vanishing on mixed double trees would imply that
\[
  \begin{tikzpicture}
    %% variance
    \node at (-.75, 0) {$\displaystyle\nu_{\mcal{M}_N}^0\Bigg[$};
    \node at (3.0625, 0) {$\displaystyle\Bigg] = o_T(1)$};
    %% vertices
    \draw[fill=black] (0,0) circle (1.75pt);
    \draw[fill=black] (1,0) circle (1.75pt);
    \draw[fill=black] (2,0) circle (1.75pt);
    %% edges
    \draw[semithick, ->] (.875, .0625) to node[midway, above] {\footnotesize$i$\normalsize} (.125, .0625);
    \draw[semithick, ->] (.125, -.0625) to node[midway, below] {\footnotesize$i'$\normalsize} (.875, -.0625);
    \draw[semithick, ->] (1.875, .0625) to node[midway, above] {\footnotesize$i'$\normalsize} (1.125, .0625);
    \draw[semithick, ->] (1.125, -.0625) to node[midway, below] {\footnotesize$i$\normalsize} (1.875, -.0625);
  \end{tikzpicture}  
\]
if $i \neq i'$. The same reasoning as in Remark \ref{rem:perfect_cancellation_exceptional_beta} allows us to conclude that
\[
  \sum_{j \in [N]} \left|\#\Big(\text{FP}\big((\sigma_N^{(i')})^{-1} \circ \sigma_N^{(i)}, j\big)\Big) - \#\Big(\text{TP}\big((\sigma_N^{(i')})^{-1} \circ \sigma_N^{(i)}, j\big)\Big)\right| = o_{i, i'}(N^2).
\]
This seems like the most natural candidate to replace \eqref{eq:lemma_vanishing_proportion_limits} in the case of $\beta = -1$.
\end{rem}

\subsection{Asymptotics for the distribution of $(\mbf{M}_N^{(i)})_{i \in I}$}\label{sec:joint_distribution}
We now restrict our attention to the distribution of $(\mbf{M}_N^{(i)})_{i \in I}$. Equation \eqref{eq:traffic_to_ncps} allows us to translate this problem to understanding the injective traffic distribution of $(\mbf{M}_N^{(i)})_{i \in I}$ on test graphs that can be obtained as a quotient of a simple directed cycle. Note that Lemma \ref{lem:double_tree} still applies, allowing us to restrict our attention to quotients that are double trees. Since a double tree obtained as a quotient of a simple directed cycle can only have opposing twin edges \cite[Figure 5]{Au18a}, condition \ref{ncond:fp_tp} of Corollary \ref{cor:permuted_nc} is already enough to calculate the limit in Equation \eqref{eq:stripping_double_tree_limit} of Lemma \ref{lem:homogeneity_traffic}. This proves the first part of Corollary \ref{cor:permuted_nc}.

If $\beta = 0$, then we do not need to worry about the full grid in Equation \eqref{eq:adjacent_singleton_matching}. Indeed, if $[e]$ and $[e']$ are adjacent singleton edges matched by $\phi$, then either
  \[
    \begin{tikzpicture}[baseline=(current  bounding  box.center), shorten > = 2.5pt]
      %% vertices
      \draw[fill=black] (2, 0) circle (1.5pt);
      \draw[fill=black] (3, 0) circle (1.5pt);
      \draw[fill=black] (4, 0) circle (1.5pt);

      %% edges
      \draw[semithick, ->] (3, 0) to node[pos=.425,above] {\footnotesize$e$\normalsize} (2, 0);
      \draw[semithick, ->] (4, 0) to node[pos=.425,above] {\footnotesize$e'$\normalsize} (3, 0);
   \end{tikzpicture}
 \]
or
 \[
    \begin{tikzpicture}[baseline=(current  bounding  box.center), shorten > = 2.5pt]
      %% vertices
      \draw[fill=black] (2, 0) circle (1.5pt);
      \draw[fill=black] (3, 0) circle (1.5pt);
      \draw[fill=black] (4, 0) circle (1.5pt);

      %% edges
      \draw[semithick, ->] (2, 0) to node[pos=.425,above] {\footnotesize$e$\normalsize} (3, 0);
      \draw[semithick, ->] (3, 0) to node[pos=.425,above] {\footnotesize$e'$\normalsize} (4, 0);
   \end{tikzpicture}
 \]
since the subsequent pinching must produce an opposing twin edge. We can assume that we are in the first case without loss of generality. Then either
\[
  (\sigma_N^{\gamma(e')})^{-1} \circ \sigma_N^{\gamma(e)}(\phi(\tar(e)), \phi(\src(e))) = (\phi(\tar(e')), \phi(\src(e')))
\]
or
\[
  (\sigma_N^{\gamma(e')})^{-1} \circ \sigma_N^{\gamma(e)}(\phi(\tar(e)), \phi(\src(e))) = (\phi(\src(e')), \phi(\tar(e'))).
\]
The first case produces a zero term
\[
  \E\big[\mbf{X}_N(\sigma_N^{(\gamma(e))}(\phi(e)))\mbf{X}_N(\sigma_N^{(\gamma(e'))}(\phi(e')))\big] = \beta = 0,
\]
while the second case is covered by Popa's condition \eqref{eq:popa_condition}.

If one is only interested in freeness, then we can settle for the averaged condition in Equation \ref{ncond:freeness}, which can be used to replace the condition in Equation \eqref{eq:lemma_vanishing_proportion_limits} of Lemma \ref{lem:vanishing_proportion_pure_double_trees} (recall that we only care about double trees with opposing twin edges). A similar argument proves the sufficiency of condition \ref{ncond:general_beta} for general $\beta \in \mathbb{D}$. This completes the proof of Corollary \ref{cor:permuted_nc}.

\addtocontents{toc}{\SkipTocEntry}
\subsection*{Acknowledgments} The author thanks Jonathan Novak for suggesting an investigation into the joint distribution of Wigner matrices with permuted entries. The simulations in this article were performed in Julia \cite{BEKS17} and the data plotted using Gadfly \cite{Gadfly}. The figures in this article were produced in Inkscape.

\bibliographystyle{amsalpha}
\bibliography{master_bib}

\end{document}